\newcommand{\R}{{\mathbb R}}
\newcommand{\eps}{{\varepsilon}}
\newcommand{\norm}[1]{\mathopen\Vert#1\mathclose\Vert}
\DeclareMathOperator{\dist}{dist}
\numberwithin{equation}{section}
\newtheorem{theorem}{Theorem}[section]
\newtheorem{lemma}[theorem]{Lemma}
\newtheorem{remark}[theorem]{Remark}
\newtheorem{definition}[theorem]{Definition}
\theoremstyle{definition}
\title{Symmetry results for the $p(x)$-Laplacian equation}
\author[L.\ Montoro]{Luigi Montoro}
\author[B.\ Sciunzi]{Berardino Sciunzi}
\address{Dipartimento di Matematica,
Universit\`a della Calabria
\newline\indent
Ponte Pietro Bucci 31B, I-87036 Arcavacata di Rende, Cosenza, Italy}
\email{montoro@mat.unical.it}
\email{sciunzi@mat.unical.it}
\author[M.\ Squassina]{Marco Squassina}
\address{Dipartimento di Informatica,
Universit\`a degli Studi di Verona
\newline\indent
C\'a Vignal 2, Strada Le Grazie 15, I-37134 Verona, Italy}
\email{marco.squassina@univr.it}
\thanks{The authors were supported by the 2009
PRIN: {\em Metodi Variazionali e Topologici
nello Studio di Fenomeni non Lineari}}
\begin{document}

\subjclass[2000]{30E25; 35B07; 58E05; 35J92}

\keywords{Quasi-linear elliptic equations, $p(x)$-Laplacian operator, symmetrization, partial symmetry.}

\begin{abstract}
We consider the equation $-{\rm div}(|Du|^{p(x)-2}Du)=f(x,u)$ and the related Dirichlet problem.
For axially symmetric domains we prove that, under suitable assumptions, there exist
Mountain-pass solutions which exhibit partial symmetry.
Furthermore, we show that Semi-stable or non-degenerate smooth solutions
need to be radially symmetric in the ball.
\end{abstract}
\maketitle

\section{Introduction and results}

\noindent
Let $\Omega$ be a smooth bounded domain in $\R^N$
and $p:\overline{\Omega}\to \R$ be a continuous function with
\begin{equation}
	\label{prange}
1<p_-:=\inf_{\Omega} p\leq \sup_{\Omega} p=:p_+<\infty.
\end{equation}
In the last few years, the interest towards nonlinear elliptic problems of the type
\begin{equation}
	\label{genfra}
-{\rm div}(|Du|^{p(x)-2}Du)=f(x,u),\quad  \text{in $\Omega$,}
\end{equation}
has considerably increased and various results appeared in the literature about
existence and regularity of weak solutions, see e.g.\ \cite[Chapter 13]{book}
and the references therein. The main goal of our paper is to establish some
symmetry results for positive solutions, provided that the domain $\Omega$ and both
functions $p(x)$ and $x\mapsto f(x,s)$ admit some partial or full symmetry in $\Omega$. We shall obtain
two type of symmetry results by exploiting two completely different techniques. A first class
of results is obtained through suitable versions of the Mountain-pass Theorem which incorporates
symmetry features provided that the functional naturally associated with the problem does increase
under polarization \cite{jvs,jvs-topol,marco-jean,myself}.
In this case we obtain the existence of nontrivial {\em Mountain-pass
solutions} with some partial symmetry information if the domain is axially symmetric
with respect to a fixed half space $H$ with $0\in\partial H$ or if it is invariant under
reflection with respect to any half space $H$ with $0\in\partial H$.
A second class of results is obtained when $\Omega$ is a ball in $\R^N$ by exploiting fine regularity estimates
for the $C^{1,\alpha}$ solutions, allowing to obtain a meaningful definition for
the first eigenvalue of the linearized operator associated with \eqref{genfra},
see \cite{CES1,DS,FMS2,MSS}.
In this case we obtain that any {\em Semi-stable solution}, namely the first eigenvalue
of a suitably defined linearized operator is nonnegative, is radially symmetric when $f(x,s)=f_0(|x|,s)$
and $p(x)=p_0(|x|)$. Whence, in some sense, solutions
with some minimality property such as being of Mountain-pass type or Semi-stable inherit some
symmetry from the data of the problem. We now come to the statement
of the main results. In the following
we denote by $H\subset\R^N$ a closed affine half-space
of $\R^N$, by $\sigma_H(x)$ the reflected of a
point $x\in\R^N$ with respect to $\partial H$ and by ${\mathcal H}_0$ the set of all half spaces
$H\subset \R^N$ such that $0\in\partial H$. The polarization of
$u$ by a half-space $H$ is denoted by $u^H$ and $\sigma_H(\Omega)$ denotes the set of
all reflected points of $\Omega$.

\begin{theorem}
	\label{mainth1}
Assume that $\sigma_H(\Omega)=\Omega$ for some $H\in {\mathcal H}_0$ and, for all $x\in\Omega$
\begin{equation}
	\label{symmc}
p(\sigma_H(x))=p(x), \,\,\quad q(\sigma_H(x))=q(x),\,\,\quad
V(\sigma_H(x))=V(x), \,\,\quad K(\sigma_H(x))=K(x).
\end{equation}
Also, assume that $p,q$ are logarithmic
H\"older continuous and  $q:\overline{\Omega}\to\R$ is a continuous function with
\begin{equation}
	\label{subcritt}
\inf_{x\in\Omega}(q(x)-p(x)+1)>0
\quad
{\rm and}
\quad
\inf_{x\in\Omega}(p^*(x)-q(x)-1)>0,
\quad
p^*(x)=\frac{p(x)N}{N-p(x)},
\end{equation}
$V,K\in C(\overline{\Omega})$ with $V(x)\geq V_0>0$ for all $x\in\Omega$.
Then there exists a nontrivial solution $u\in W^{1,p(x)}_0(\Omega)$ of
\begin{equation}
	\label{problem}
\begin{cases}
-{\rm div}(|Du|^{p(x)-2}Du)+V(x)u^{p(x)-1}=K(x)u^{q(x)} & \text{for $x\in\Omega$,}  \\
u\geq 0 & \text{for $x\in\Omega$,}  \\
u=0 & \text{for $x\in\partial\Omega$.}
\end{cases}
\end{equation}
at the Mountain-pass level such that $u^H$ is also a solution of \eqref{problem} at the same energy level.
\end{theorem}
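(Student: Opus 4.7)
The plan is to apply the variational framework together with the symmetric Mountain-pass theorems of \cite{jvs,jvs-topol,marco-jean,myself}, which produce critical points with polarization symmetry whenever the underlying functional does not increase under polarization. First I would associate with \eqref{problem} the energy functional
\[
J(u)=\int_\Omega\frac{|Du|^{p(x)}}{p(x)}\,dx+\int_\Omega\frac{V(x)}{p(x)}|u|^{p(x)}\,dx-\int_\Omega\frac{K(x)}{q(x)+1}(u^+)^{q(x)+1}\,dx
\]
on the variable-exponent Sobolev space $W^{1,p(x)}_0(\Omega)$, truncating the nonlinearity at $u^+$ so that critical points turn out to be nonnegative after testing the Euler--Lagrange equation against $u^-$ and exploiting the coercivity of the principal part. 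The logarithmic H\"older continuity of $p$ and $q$ together with the continuity of $V,K$ make $J$ of class $C^1$.

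Next I would verify the Mountain-pass geometry and the Palais-Smale condition in the standard way for variable-exponent problems. A modular lower bound, together with the continuous embedding $W^{1,p(x)}_0(\Omega)\hookrightarrow L^{q(x)+1}(\Omega)$ and the ordering $q(x)+1>p(x)$ coming from \eqref{subcritt}, gives the positive sphere; for any nontrivial positive $v\in C_c^\infty(\Omega)$, the curve $t\mapsto tv$ sends $J$ to $-\infty$ as $t\to\infty$. The Palais-Smale condition at every level then follows from the compactness of the embedding, which is guaranteed by $\inf_\Omega(p^*(x)-q(x)-1)>0$, combined with the $(S_+)$ property of the $p(x)$-Laplacian.

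The core of the argument is the polarization monotonicity of $J$. By \eqref{symmc}, polarization acts by swaps between the pair $\{x,\sigma_H(x)\}$ on integrands that are invariant under such swaps, so both the potential term and the nonlinearity are preserved: $\int_\Omega V|u^H|^{p}/p\,dx=\int_\Omega V|u|^{p}/p\,dx$ and the analogous identity for $K$ and $q$. For the gradient term, the variable-exponent P\'olya--Szeg\H{o} inequality for polarization, applicable thanks to $p(\sigma_H(x))=p(x)$ and the convexity of $t\mapsto t^{p(x)}/p(x)$, yields
\[
\int_\Omega\frac{|Du^H|^{p(x)}}{p(x)}\,dx\le\int_\Omega\frac{|Du|^{p(x)}}{p(x)}\,dx,
\]
and therefore $J(u^H)\le J(u)$ for every $u$. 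Establishing this gradient inequality in the variable-exponent setting is the principal technical point of the proof; it reduces to the classical polarization inequality applied to the $x$-dependent convex integrand $\Phi(x,t)=t^{p(x)}/p(x)$, for which the symmetry $\Phi(\sigma_H(x),t)=\Phi(x,t)$ is essential.

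Finally, with the Mountain-pass geometry, the Palais-Smale condition and the polarization inequality $J(u^H)\le J(u)$ all at hand, the abstract symmetric Mountain-pass theorem from \cite{jvs-topol,marco-jean} (see also \cite{jvs,myself}) produces a critical point $u\in W^{1,p(x)}_0(\Omega)$ at the Mountain-pass level with the additional property that $u^H$ is again a critical point at the same energy. The symmetry assumptions \eqref{symmc} imply that problem \eqref{problem} itself is invariant under polarization, so $u^H$ solves \eqref{problem} as well, and the nonnegativity of both $u$ and $u^H$ is inherited from the truncation.
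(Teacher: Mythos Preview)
Your outline follows essentially the same route as the paper: associate the functional, check Mountain-pass geometry and Palais--Smale, verify that polarization does not increase the energy, and then invoke the abstract symmetric Mountain-pass result of \cite{marco-jean}. There is, however, a genuine gap that the paper addresses and your proposal does not.

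The abstract lemma from \cite{marco-jean} (Lemma~\ref{propositionAbstractSymmetricMinimax} in the paper) requires the map $\Psi:u\mapsto u^H$ to be \emph{continuous} from $W^{1,p(x)}_0(\Omega)$ into itself. This is what allows one to pass from the two Palais--Smale sequences $(u_j),(v_j)$ and the auxiliary sequence $(w_j)$ with $\|v_j-w_j^H\|\to 0$ to the conclusion that $v_j\to u^H$; without it there is no reason the limit of $(v_j)$ should be the polarization of the limit of $(u_j)$. You do not mention this continuity, and it is not automatic in the variable-exponent setting. The paper establishes it (Lemma~\ref{continpol}) by first proving an \emph{equality}
\[
\int_\Omega \mu(x)\,|Du^H|^{p(x)}\,dx=\int_\Omega \mu(x)\,|Du|^{p(x)}\,dx
\]
for any reflection-invariant weight $\mu$ (Lemma~\ref{invar}), and then combining weak convergence $u_j^H\rightharpoonup u_0^H$ with convergence of norms plus the uniform convexity of $W^{1,p(x)}_0(\Omega)$ to obtain strong convergence. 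Your P\'olya--Szeg\H{o}-type \emph{inequality} is enough for $J(u^H)\le J(u)$, but it is the sharper equality that drives the continuity argument; this is the ``principal technical point'' you allude to, and it deserves to be stated as an identity, not merely as an inequality.

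One further remark: your closing sentence ``the symmetry assumptions imply that problem \eqref{problem} itself is invariant under polarization, so $u^H$ solves \eqref{problem} as well'' is either redundant or misleading. The equation is invariant under the \emph{reflection} $\sigma_H$, but it is not true in general that the polarization of a solution is again a solution; indeed the paper explicitly flags this as an open problem right after the statement of Theorem~\ref{mainth1} (cf.\ the discussion of \cite{bwwi}). The fact that $u^H$ is a critical point comes entirely from the abstract lemma together with the continuity of $\Psi$ and the Palais--Smale condition, not from any direct invariance of the PDE under polarization.
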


\noindent
In \cite[Lemma 2.5]{bwwi}, for the semi-linear case $p(x)=2$ for every $x\in\Omega$, the authors
introduce a new ingredient, namely that if $u,u^H$ are both classical solution of $-\Delta w=f(x,w)$
and $f$ satisfies the invariance
\begin{equation}
	\label{invrefl-f}
f(\sigma_H(x),s)=f(x,s),\qquad\text{for all $x\in\Omega$ and $s\in\R$},
\end{equation}
with respect to some $H\in {\mathcal H}_0$,
then either $u(x)>u(\sigma_H(x))$ for all $x\in{\rm Int}(H\cap \Omega)$ (resp.\ $u(x)<u(\sigma_H(x))$ for all $x\in{\rm Int}(H\cap \Omega)$)
or $u(x)=u(\sigma_H(x))$ for all $x\in \Omega$. On account of Theorem~\ref{mainth1}, it would be
interesting to extend these type of results to more general framework. This is to our knowledge
an interesting open problem. In the framework of Theorem \ref{mainth1}, we also have the following

\begin{theorem}
	\label{mainth2}
Assume that $\sigma_H(\Omega)=\Omega$ for all $H\in {\mathcal H}_0$,
and that \eqref{symmc}-\eqref{subcritt} hold. Then there exists
a nontrivial solution $u\in W^{1,p(x)}_0(\Omega)$ of \eqref{problem} at the Mountain-pass level such
that $u(x)=\psi(|x|,\xi\cdot x)$ for some unit vector $\xi\in\R^N$
and some $\psi:\R^+\times\R\to\R$ with $\psi(r,\cdot)$ nondecreasing for all $r\geq 0$.
\end{theorem}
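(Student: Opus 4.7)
The plan is to refine the Mountain-pass scheme underlying Theorem \ref{mainth1} by restricting the admissible paths to those which are already polarized with respect to every half-space through the origin containing a fixed axis. Fix a unit vector $\xi\in\R^N$ and set $\mathcal{H}_\xi:=\{H\in\mathcal{H}_0:\xi\in\mathrm{Int}(H)\}$. The key classical fact I will invoke (in the spirit of Smets--Willem and Brock--Solynin) is that a nonnegative $u\in W^{1,p(x)}_0(\Omega)$ satisfies $u^H=u$ for every $H\in\mathcal{H}_\xi$ if and only if it has the foliated Schwarz form $u(x)=\psi(|x|,\xi\cdot x)$ with $\psi(r,\cdot)$ nondecreasing. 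Hence it is enough to produce a Mountain-pass solution that is invariant under polarization with respect to every $H\in\mathcal{H}_\xi$.

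Under hypothesis \eqref{symmc}, which now holds for every $H\in\mathcal{H}_0$ and in particular for every $H\in\mathcal{H}_\xi$, the ingredients supporting Theorem \ref{mainth1} provide both the norm invariance of polarization in $W^{1,p(x)}_0(\Omega)$ and the inequality $J(u^H)\leq J(u)$ for the energy $J$ associated with \eqref{problem}. Consequently, for the Mountain-pass class $\Gamma$ and any $\gamma\in\Gamma$, each polarized path $t\mapsto\gamma(t)^H$ stays in $\Gamma$ and does not raise the maximum of $J$ along it. Iterating over a countable dense family in $\mathcal{H}_\xi$ (or, more cleanly, invoking the continuous polarization device of Van Schaftingen \cite{jvs,jvs-topol}), one shows that the Mountain-pass level $c$ coincides with the infimum of $\max_t J(\gamma(t))$ taken over paths whose image lies in the closed cone
\[
\mathcal{S}_\xi:=\{u\in W^{1,p(x)}_0(\Omega):u\geq 0,\ u^H=u\ \text{for every}\ H\in\mathcal{H}_\xi\}.
\]
An application of a symmetric Mountain-pass/deformation theorem in the style of \cite{jvs-topol,marco-jean,myself}, combined with the Palais--Smale compactness guaranteed by the subcritical growth \eqref{subcritt}, then produces a nontrivial critical point $u\in\mathcal{S}_\xi$ of $J$ on the full space at level $c$. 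The characterization recalled above identifies $u$ with a function of the desired form.

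The main obstacle is the use of the symmetric Mountain-pass machinery: polarization is \emph{not} a group action on $W^{1,p(x)}_0(\Omega)$, so Palais' principle of symmetric criticality cannot be invoked directly. One must instead rely on the abstract polarization-based minimax framework of \cite{jvs,jvs-topol}, whose hypotheses --- in particular, continuity of the map $u\mapsto u^H$ in the $W^{1,p(x)}_0$ topology and the polarization inequality for $J$ --- have to be verified in the variable exponent setting. This verification is the same technical core already present in the proof of Theorem \ref{mainth1}, so beyond the bookkeeping needed to carry the constraint $H\in\mathcal{H}_\xi$ throughout the path-modification argument, no essentially new difficulty arises.
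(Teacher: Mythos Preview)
Your outline points at the right machinery but describes a route the paper does \emph{not} take, and the specific plan you sketch has a real obstruction. The paper does not restrict the admissible paths to the cone $\mathcal{S}_\xi$ and then run a constrained Mountain-pass. Instead it applies directly Van Schaftingen's abstract symmetrization minimax principle (recorded as Lemma~\ref{mpconc-symm}), which is built on a \emph{two-space} structure: $X=W^{1,p(x)}_0(\Omega)$ for the variational problem and a weaker space $V=L^{p_-}(\Omega)$ in which iterated polarizations $u^{H_1\cdots H_m}$ converge to the cap symmetrization $u^*$. The lemma outputs an unconstrained Palais--Smale sequence $(u_j)$ in $X$ with $\|u_j-u_j^*\|_{V}\to 0$; one then passes to the limit via the PS condition and the contractivity of $*$ in $V$ to obtain a critical point with $u=u^*$, which is exactly the foliated Schwarz form.

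Your step ``iterating over a countable dense family in $\mathcal{H}_\xi$ one shows that $c$ equals the infimum over paths whose image lies in $\mathcal{S}_\xi$'' is where the gap sits: iterated polarizations of a path converge only in $L^{p_-}(\Omega)$, not in $W^{1,p(x)}_0(\Omega)$, and the cap symmetrization $u\mapsto u^*$ is not continuous on $X$. Hence one cannot manufacture an admissible $X$-continuous path lying in $\mathcal{S}_\xi$ with nearly optimal energy, and the equality of the two minimax levels is not available by this route. The two-space device is precisely what circumvents this. Correspondingly, the new verification beyond Theorem~\ref{mainth1} is not mere bookkeeping: Lemma~\ref{compatib} checks the hypotheses of the abstract framework, in particular the \emph{joint} continuity of $(u,H)\mapsto u^H$ from $X\times\mathcal{H}_0$ to $X$ and the approximation $u^{H_1\cdots H_m}\to u^*$ in $V$, neither of which is needed for a single fixed $H$.
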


\noindent
The statement of Theorem~\ref{mainth2} could be easily extended,
via minor modifications, to cover the case where the domain is invariant
under spherical cap symmetrization \cite{jvs}, $\Omega^*=\Omega$, which is
equivalent to $\Omega^H=\Omega$ for every $H\in {\mathcal H}_0$,
in place of the more stringent assumption $\sigma_H(\Omega)=\Omega$, for all $H\in {\mathcal H}_0$.
It is readily seen that Theorems~\ref{mainth1} and~\ref{mainth2} can be extended to cover a more general
class of nonlinearities $f(x,s)$ in place of $K(x)s^{q(x)}$ for $s\geq 0$. It is sufficient to assume \eqref{invrefl-f}
a growth condition such as $|f(x,s)|\leq C+C|s|^{q(x)}$ for all $x\in\Omega$ and $s\in\R$, $f(x,s)=0$ for $s\leq 0$
(in order to guarantee that the solutions are nonnegative), $f(x,s)=o(|s|^{p(x)-1})$ as $s\to 0$ and an Ambrosetti-Rabinowitz
type condition: there exists $\mu>0$ with $\inf\{\mu-p(x):x\in\Omega\}>0$ and $R>0$ such that
$\mu F(x,s)\leq f(x,s)s$ for all $x\in\Omega$ and $s\geq R$, where $F(x,s)=\int_0^s f(x,\tau)d\tau$.
We refer the reader to \cite{chabr}, where the
Mountain-pass geometry and the Palais-Smale condition of
\begin{equation*}
\varphi(u)=\int_\Omega\frac{|Du|^{p(x)}}{p(x)}+\int_\Omega \frac{V(x)}{p(x)} |u|^{p(x)}-\int_\Omega F(x,u),\quad
u\in W^{1,p(x)}_0(\Omega),
\end{equation*}
are handled in this framework.
In the second part of the paper we study the radial symmetry of solutions to \eqref{genfra}, considering the problem
\begin{equation}\label{P}
\begin{cases}
-{\rm div}(|Du|^{p(|x|)-2}Du)=f(|x|,u) & \text{in $\Omega$}, \\
u>  0 & \text{in $\Omega$}, \\
u=0 & \text{on $\partial \Omega$}\,.
\end{cases}
\end{equation}
with $f(t,s)$  locally
Lipschitz continuous in $[0,\infty)\times [0,\infty)$ and positive in $[0,\infty)\times (0,\infty)$. Let us recall that
 the corresponding linearized operator is given by
\begin{align}\nonumber
\hspace{1cm} L_u(v,\varphi)&:= \int_\Omega |Du|^{p(x)-2}(Dv, D\varphi)\\
&+\int_\Omega (p(x)-2) |Du|^{p(x)-4}(Du,Dv)(Du,D\varphi) - \int_\Omega \partial_sf(|x|,u)v \varphi, \nonumber
\end{align}
for any $v,\varphi\in H^{1,2}_{0,\rho}$, where the weighted Sobolev space  $H^{1,2}_{0,\rho}$ will be
suitably defined in Section \ref{section4}.
We will prove some summability properties of
$|Du|^{-1}$ that will allow us to get a weighted Sobolev type inequality (see Theorem \ref{thm: Sobolev}).
This is the key to recover a complete  spectral theory for the linearized operator, carried out in Section \ref{hfjhdbvjdhbvjhdf}. Consequently we can give the following

\begin{definition}\rm
	\label{jhfgjdbjf}
We say that a solution $u$ is Semi-stable if
\[
\mu_1(L_u,\Omega)\geq 0
\]
being $\mu_1(L_u,\Omega)$ the first eigenvalue of the linearized operator $L_u$ in $\Omega$.
Furthermore, the solution $u$ is said to be non-degenerate if $0$ is not an eigenvalue of
the linearized operator $L_u$ in $\Omega$.
\end{definition}

Note that, by the variational characterization of the first eigenvalue, it follows that
equivalently $u$ is Semi-stable if and only if
$L_u(\varphi,\varphi)\geq 0$ for any $\varphi\in H^{1,2}_{0,\rho}$. Since the
linearized operator arises as second derivative of the energy
functional, it follows that the minima of the energy functional are Semi-stable solutions.
Also, if  $f(t,s)$ is decreasing with respect to the $s$-variable,
then it follows that any solution is Semi-stable. Moreover in many
cases, depending on $p(\cdot)$, it is possible to show that monotone
solutions are stable (namely $\mu_1(L_u,\Omega)>0$) solutions, see e.g.\ \cite{dino-sns}.
\noindent
On the other hand Mountain-pass solutions (as the ones
previously obtained) generally have Morse index
equal to one. That is, the first eigenvalue of the linearized operator
is negative, and the second one is non-negative. This is well known in the
semi-linear case and we refer to \cite{CES2} for some remarks regarding the quasi-linear case. We have the following
\begin{theorem}\label{radsymm}
Let $\Omega$ be a ball or an annulus  in $\R^N$ and $u$ be any $C^{1,\alpha}(\overline\Omega)$ solution to \eqref{P},
with $f(t,s)$ is locally
Lipschitz continuous in $[0,\infty)\times [0,\infty)$ and
positive in $[0,\infty)\times (0,\infty)$. Assume that the $u$ is Semi-stable.
Then, if $p\in C^1(\Omega)$ with $p(|x|)\geq 2$  then $u$ is radially symmetric.
The same conclusion follows assuming that the solution $u$ in non-degenerate.
\end{theorem}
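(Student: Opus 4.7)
The plan is to exploit the rotational invariance of \eqref{P} together with the spectral theory of $L_u$ developed in Section \ref{hfjhdbvjdhbvjhdf}. For $i\neq j$ in $\{1,\dots,N\}$ set
\[
u_{ij}(x):=x_i\,\partial_{x_j}u(x)-x_j\,\partial_{x_i}u(x),
\]
which equals $\frac{d}{d\theta}\big|_{\theta=0}u(R_\theta x)$, where $R_\theta$ denotes the rotation in the $(x_i,x_j)$-plane. Since $p(|\cdot|)$, $f(|\cdot|,s)$ and $\Omega$ are all invariant under $R_\theta$, each $u\circ R_\theta$ is again a solution of \eqref{P}. A direct computation (using $|D(u\circ R_\theta)|=|Du|\circ R_\theta$ and the orthogonality of $R_\theta$) shows that the $p(x)$-Laplace operator commutes with $R_\theta$, so differentiating the weak formulation of \eqref{P} in $\theta$ at $\theta=0$ yields
\[
L_u(u_{ij},\varphi)=0,\qquad\text{for every }\varphi\in H^{1,2}_{0,\rho}.
\]
The Dirichlet condition $u=0$ on $\partial\Omega$ together with the tangency of the rotation orbits to $\partial\Omega$ gives $u_{ij}=0$ on $\partial\Omega$ in the trace sense.

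Before drawing conclusions, one must verify that $u_{ij}\in H^{1,2}_{0,\rho}$. The $C^{1,\alpha}(\overline\Omega)$ bound and the assumption $p(|x|)\geq 2$ keep the natural weight globally bounded, and the key ingredient is the weighted second-order regularity of $u$ of type $W^{2,2}_{\rho,\mathrm{loc}}(\Omega\setminus Z_u)$, with $Z_u=\{|Du|=0\}$, combined with the summability of $|Du|^{-1}$ which underlies Theorem \ref{thm: Sobolev}. Through this, $Du_{ij}=\partial_{x_j}u\,e_i-\partial_{x_i}u\,e_j+x_i\,D\partial_{x_j}u-x_j\,D\partial_{x_i}u$ is controlled in the weighted $L^2$-norm, and an approximation argument (cutting off a neighborhood of $Z_u$ and passing to the limit) places $u_{ij}$ in $H^{1,2}_{0,\rho}$.

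With $u_{ij}\in H^{1,2}_{0,\rho}\cap\ker L_u$ at hand, the two hypotheses immediately conclude. If $u$ is non-degenerate, then $\ker L_u=\{0\}$, so $u_{ij}\equiv 0$ for every pair $i\neq j$ and $u$ is radially symmetric. If instead $u$ is semi-stable and not radial, some $u_{ij}\not\equiv 0$; fixing $x_0\in\Omega$ and observing that $\theta\mapsto u(R_\theta x_0)$ is $2\pi$-periodic, we get $\int_0^{2\pi}u_{ij}(R_\theta x_0)\,d\theta=0$, whence $u_{ij}$ must change sign on $\Omega$. By the spectral theory of Section \ref{hfjhdbvjdhbvjhdf}, however, the first eigenfunction of $L_u$ is of one sign, so any sign-changing eigenfunction corresponds to an eigenvalue strictly greater than $\mu_1(L_u,\Omega)$. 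Since here that eigenvalue is $0$, we conclude $\mu_1(L_u,\Omega)<0$, contradicting semi-stability.

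The main obstacle is the rigorous justification that $u_{ij}\in H^{1,2}_{0,\rho}$ and that the weak formulation of \eqref{P} can indeed be differentiated in the rotation parameter through the critical set $Z_u$: the coefficients of the linearized form degenerate precisely where $|Du|=0$, and making this manipulation clean is exactly where the summability of $|Du|^{-1}$ and the weighted Sobolev inequality of Theorem \ref{thm: Sobolev} are needed. A secondary technical point is to secure the one-signedness of the first eigenfunction of $L_u$ in $H^{1,2}_{0,\rho}$, required in the semi-stable step; this is expected to follow via a Krein--Rutman or weighted Harnack-type argument once the compactness provided by Theorem \ref{thm: Sobolev} is in place.
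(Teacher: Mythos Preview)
Your proposal is correct and follows essentially the same route as the paper. The paper works in polar coordinates with the angular derivatives $u_{\theta_i}$, while you use the Cartesian infinitesimal rotation fields $u_{ij}=x_i\partial_{x_j}u-x_j\partial_{x_i}u$; these are the same objects up to coordinates, and the logic (membership in $H^{1,2}_{0,\rho}$ via the weighted second-order estimates, $L_u(u_{ij},\cdot)=0$ by rotational invariance, then the sign-change contradiction with the simplicity and one-signedness of $\mu_1$ in the semi-stable case, or $\ker L_u=\{0\}$ in the non-degenerate case) is identical. The two ``obstacles'' you flag are precisely what the paper disposes of beforehand: $u_{ij}\in H^{1,2}_{0,\rho}$ comes from the $\int_\Omega |Du|^{p(x)-2}\|D^2u\|^2\leq C$ estimate (a direct consequence of the regularity lemma underlying Lemma~\ref{le:1/gradient}), and the one-signedness of the first eigenfunction is established in Section~\ref{hfjhdbvjdhbvjhdf}.
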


The symmetry result obtained in  Theorem  \ref{radsymm} holds under very general
assumptions on the nonlinearity $f$, assuming that the solution is
Semi-stable or non-degenerate. In the semi-linear case $p(x)=2$, or more
generally in the quasi-linear case $p(x)=p$, in the case of a  convex
domain (not the annulus), it is possible to get similar results exploiting
the moving plane technique \cite{serrin} (see also \cite{GNN}), without any
stability assumption. We refer to \cite{DS} and the references
therein for a description of the moving planes procedure in the quasi-linear case.
\noindent
Let us mention here that this technique in general can not be exploited in our
case. In fact the moving plane technique is based on the invariance
of the equation under reflections with respect to hyperplanes, which
is not true in general in the case
of $p(x)$-Laplace equations.
\noindent
Let us also point out that our result holds in the case
of solutions which are minima of the associated energy functional (and consequently Semi-stable).
We refer to \cite{fan} (see Section 3) for previous results in this setting.

\section{Recalls on variable exponent Sobolev spaces}

\noindent
We recall here some definitions and basic
properties of the variable exponent Lebesgue-Sobolev spaces
$L^{p(\cdot)}(\Omega)$, $W^{1,p(\cdot)}(\Omega)$ and
$W_0^{1,p(\cdot)}(\Omega)$, where $\Omega$ is a bounded domain in $\R^N$.
We set
$$
C_+(\overline{\Omega}) = \Big\{h\in C(\overline{\Omega}):
\min_{\overline{\Omega}} h> 1 \Big\},
$$
and, for $h\in C(\overline{\Omega})$, we denote
\[
h_- := \min_{\overline{\Omega}} h \quad \text{ and } \quad h_+ := \max_{\overline{\Omega}} h.
\]
For $p \in C_+(\overline{\Omega})$, we introduce the variable exponent Lebesgue space
\[
L^{p(\cdot)}(\Omega) = \left\{u : \Omega \to\R :
\text{ $u$ is measurable and $\int_{\Omega} |u(x)|^{p(x)} dx <+\infty$} \right\},
\]
endowed with the Luxemburg norm
\[
\|u\|_{p(\cdot)} = \inf \Big\{\mu>0:
\int_\Omega \left|\frac {u(x)} \mu\right|^{p(x)} dx \le 1 \Big\},
\]
which is a separable and reflexive Banach space.
If $u \in L^{p(\cdot)}(\Omega)$, the term
$\rho_{p(\cdot)}(u) := \int_{\Omega} |u(x)|^{p(x)} dx$ is called $p(\cdot)$-modular of $u$.
We summarize here a few basic properties of
these spaces, the details being found in \cite{book}.
If $p_1, p_2 \in C_+(\overline{\Omega})$ such that
$p_1 \le p_2$ in $\Omega$, then the embedding
$L^{p_2(\cdot)}(\Omega) \hookrightarrow L^{p_1(\cdot)}(\Omega)$ is continuous.
For any $u \in L^{p(\cdot)}(\Omega)$ and $v \in L^{p'(\cdot)}(\Omega)$,
the following H\"{o}lder type inequality holds
\begin{equation*}
\left|\int_\Omega uv dx\right| \le \left(\frac1{p^-} +
\frac1{p'^-}\right) \|u\|_{p(\cdot)} \|v\|_{p'(\cdot)}.
\end{equation*}
The norm and $p(\cdot)$-modular of every $u \in L^{p(\cdot)}(\Omega)$ have the following relation
\[
\min\big\{\|u\|_{p(\cdot)}^{p^-}, \|u\|_{p(\cdot)}^{p^+}\big\} \le
\rho_{p(\cdot)}(u)
\le \max\big\{\|u\|_{p(\cdot)}^{p^-}, \|u\|_{p(\cdot)}^{p^+}\big\}.
\]
For $p \in C_+(\overline{\Omega})$, the variable exponent Sobolev space is defined by
\[
W^{1,p(\cdot)}(\Omega) = \big\{u \in L^{p(\cdot)}(\Omega) :
D_i u \in L^{p(\cdot)}(\Omega)\,\,\, \text{for $i=1, ..., N$} \big\},
\]
endowed with the norm
\[
\|u\| = \|u\|_{p(\cdot)} + \|D u\|_{p(\cdot)},
\]
which is a separable and reflexive Banach space.
It is important to note that, unlike the constant
exponent case, the smooth functions are in general not dense
in $W^{1,p(\cdot)}(\Omega)$.
However, as shown in \cite{book},
if the exponent variable $p \in C_+(\overline{\Omega})$
is logarithmic Holder continuous, see \cite{book}, then the smooth functions are dense in $W^{1,p(\cdot)}(\Omega)$.
The space $W_0^{1,p(\cdot)}(\Omega)$ is defined as the
closure of $C_0^\infty(\Omega)$ under the norm $\|\cdot\|$, are meaningful.
Moreover, the $p(\cdot)$-Poincar\'e inequality
$\|u\|_{p(\cdot)} \le C \|D u\|_{p(\cdot)}$
holds for all $u \in W_0^{1,p(\cdot)}(\Omega)$,
where $C$ depends on $p$, $|\Omega|$, ${\rm diam}(\Omega)$ and $N$,
see \cite[Theorem 4.3]{book}. Therefore,
\[
\|u\|_{1,p(\cdot)} = \|D u\|_{p(\cdot)}
\]
is an equivalent norm in $W_0^{1,p(\cdot)}(\Omega)$ and
$W_0^{1,p(\cdot)}(\Omega)$ is a separable and reflexive Banach space.
Finally, note that when $s \in C_+(\overline{\Omega})$ and $\inf_{\Omega}(p^*(x)-s(x))>0$, where $p^*(x) = Np(x)/[N - p(x)]$
if $p(x) < N$ and $p^*(x) = \infty$ if $p(x) \ge N$,
the embedding $W_0^{1,p(\cdot)}(\Omega) \hookrightarrow
L^{s(\cdot)}(\Omega)$ is compact.
\vskip6pt
\noindent
{\sc Notation.} Generic fixed numerical constants will be denoted by
$C$ (with subscript in some case), and will be allowed to vary within a
single line or formula.

\section{Proof of Theorem~\ref{mainth1}}
\label{proof12}
Problem~\eqref{problem} is naturally associated with the functional $\varphi:W^{1,p(x)}_0(\Omega)\to\R$
\begin{equation}
	\label{deffunct}
\varphi(u)=\int_\Omega\frac{|Du|^{p(x)}}{p(x)}+\int_\Omega \frac{V(x)}{p(x)} |u|^{p(x)}-\int_\Omega \frac{K(x)}{q(x)+1} |u^+|^{q(x)+1}.
\end{equation}
It is readily seen that $\varphi$ is of class $C^1$ and its critical points correspond to
nonnegative weak solutions to \eqref{problem}, namely we have
$$
\int_{\Omega}|Du|^{p(x)-2}Du\cdot D\zeta+\int_{\Omega}V(x)u^{p(x)-1}\zeta=\int_{\Omega}K(x)u^{q(x)}\zeta,\quad
\forall\zeta\in W^{1,p(x)}_0(\Omega).
$$
\noindent
For the reader's convenience,
we recall that the polarization of a measurable function $u:\R^N\to\R$ by a polarizer $H$
is the function $u^H:\R^N\to\R$ defined by
\begin{equation*}
% \label{polarizationdef}
u^H(x):=
\begin{cases}
 \max\{u(x),u(\sigma_H(x))\}, & \text{if $x\in H$} \\
 \min\{u(x),u(\sigma_H(x))\}, & \text{if $x\in \R^N\setminus H$.} \\
\end{cases}
\end{equation*}
The polarization $\Omega^H\subset\R^N$ of a set $\Omega\subset\R^N$ is
defined as the unique set which satisfies $\chi_{\Omega^H}=(\chi_\Omega)^H$,
where $\chi$ denotes the characteristic function.
The polarization $u^H$ of a function $u$ defined on $\Omega\subset \R^N$
is the restriction to $\Omega^H$ of the polarization of the extension $\tilde u:\R^N\to\R$ of
$u$ by zero outside $\Omega$. For a domain $\Omega$, the set $\sigma_H(\Omega)$ denotes the set of
all reflected points of $\Omega$. In particular, if $H\in {\mathcal H}_0$ and $\Omega$ is invariant under reflection
with respect to $\partial H$, namely $\sigma_H(\Omega)=\Omega$, then $u^H:\Omega\to\R$ writes down as
\begin{equation}
 \label{polari-zz}
u^H(x)=
\begin{cases}
 \max\{u(x),u(\sigma_H(x))\}, & \text{if $x\in H\cap\Omega$} \\
 \min\{u(x),u(\sigma_H(x))\}, & \text{if $x\in (\R^N\setminus H)\cap \Omega$.} \\
\end{cases}
\end{equation}

\subsection{Some preliminary results}

\noindent
In \cite{marco-jean}, Squassina and Van Schaftingen recently proved the following

\begin{lemma}
\label{propositionAbstractSymmetricMinimax}
Let $(X,\|\cdot\|)$ be a Banach space, \(M\) be a metric space and \(M_0 \subset M\).
Let also consider \(\Gamma_0 \subset C(M_0, X)\) and define the set
\[
  \Gamma=\{ \gamma \in C(M, X): \gamma\vert_{M_0} \in \Gamma_0 \}
\]
If \(\varphi \in C^1(X, \R)\)
satisfies
\[
  c=\inf_{\gamma \in \Gamma} \sup_{t\in M} \varphi (\gamma(t)) > \sup_{\gamma_0 \in \Gamma_0} \sup_{t \in M_0} \varphi(\gamma_0(t))=a,
\]
\(\Psi \in C(X, X)\) and
\[
  \varphi \circ \Psi \le \varphi, \qquad \Psi(\Gamma)\subset\Gamma,
\]
then for every \(\epsilon \in ]0, \frac{c-a}{2}[\), \(\delta > 0\) and \(\gamma \in \Gamma\) such that
\[
 \sup_{M} \varphi \circ \gamma \le c+\epsilon,
\]
there exist elements \(u, v, w \in X\) such that
\begin{enumerate}
	%[a)]
 \item[a.1)] \(c-2\epsilon \leq \varphi(u) \leq c+2\epsilon\),
\vskip2pt
 \item[a.2)] \(c-2\epsilon \le \varphi(v) \le c+2\epsilon\),
\vskip2pt
 \item[b.1)] \(\norm{u-w} \le 3\delta\),
\vskip2pt
 \item[b.2)] \(\dist_X(w, \gamma(M)) \le \delta\),
\vskip2pt
 \item[b.3)] \(\norm{v-\Psi(w)} \le 2\delta\),
\vskip2pt
 \item[c.1)] \(\norm{\varphi'(u)} < 8\epsilon/\delta\),
\vskip2pt
 \item[c.1)] \(\norm{\varphi'(v)} < 8\epsilon/\delta\).
\end{enumerate}
\end{lemma}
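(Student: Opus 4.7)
The plan is to view this lemma as a symmetric enhancement of the Ghoussoub--Preiss quantitative variational principle, carried out on the space of paths rather than on the Banach space $X$ itself. Equip $\Gamma$ with the uniform metric $d(\gamma_1,\gamma_2)=\sup_{t\in M}\|\gamma_1(t)-\gamma_2(t)\|$, which, since paths in $\Gamma$ are constrained to take values in $\Gamma_0$ on $M_0$ (a constraint one may assume closed up to routine reduction) and $X$ is complete, turns $\Gamma$ into a complete metric space. The functional $J(\gamma):=\sup_{t\in M}\varphi(\gamma(t))$ is lower semicontinuous and, because of the strict gap $c>a$, bounded below on $\Gamma$. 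Applying the quantitative form of Ekeland's principle to $J$ at the initial $\gamma$, with parameters calibrated by $\varepsilon$ and $\delta$, I obtain a nearby path $\widetilde\gamma\in\Gamma$ with $d(\gamma,\widetilde\gamma)\le\delta$, $J(\widetilde\gamma)\le J(\gamma)\le c+\varepsilon$, and the near-minimality bound $J(\widetilde\gamma)\le J(\beta)+\tfrac{2\varepsilon}{\delta}\,d(\widetilde\gamma,\beta)$ for every competitor $\beta\in\Gamma$.

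The next step is to extract the two almost-critical points by deformation. Choose $t_0\in M$ with $\varphi(\widetilde\gamma(t_0))\ge c-\varepsilon$ and set $w:=\widetilde\gamma(t_0)$; then automatically $\dist_X(w,\gamma(M))\le\delta$. If no $u$ with $\|u-w\|\le 3\delta$ and $\varphi(u)$ in the prescribed $2\varepsilon$-window had $\|\varphi'(u)\|<8\varepsilon/\delta$, then a standard pseudo-gradient cutoff construction inserted into $\widetilde\gamma$ around $t_0$ would strictly decrease $J$ by more than $\tfrac{2\varepsilon}{\delta}$ times the cost of the modification, contradicting the near-minimality of $\widetilde\gamma$; the gap $c>a$ guarantees $t_0\notin M_0$, so the modification respects the boundary condition defining $\Gamma$. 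To obtain $v$, I would apply the same argument to the path $\Psi\circ\widetilde\gamma$, which lies in $\Gamma$ by hypothesis and satisfies $J(\Psi\circ\widetilde\gamma)\le J(\widetilde\gamma)$ because $\varphi\circ\Psi\le\varphi$; its value at $t_0$ is $\Psi(w)$, and the same deformation delivers $v$ with $\|v-\Psi(w)\|\le 2\delta$ and $\|\varphi'(v)\|<8\varepsilon/\delta$.

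The main obstacle I anticipate is the coupling between the two deformations. In the classical Ghoussoub--Preiss principle one extracts a single almost-critical point and is free to select the pseudo-gradient direction; here, however, the perturbation of $\widetilde\gamma$ used to locate $u$ near $w$ might spoil $\Psi\circ\widetilde\gamma$ as an admissible near-minimizer for the second part, or vice versa. One must therefore choose pseudo-gradient directions compatible with the $\Psi$-structure and verify that the composition with $\Psi$ of the perturbed path still lies in $\Gamma$ with a comparable near-minimality bound. Threading this simultaneous deformation while preserving the sharp constants $8\varepsilon/\delta$, $2\delta$ and $3\delta$, and securing just enough continuity of $\Psi$ along the chosen variations so that the limiting $v$ genuinely lies close to $\Psi(w)$ rather than merely close to $\Psi(\widetilde\gamma(M))$, is where the real substance of the lemma resides.
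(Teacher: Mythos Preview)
The paper does not prove this lemma at all: it is quoted verbatim as a result of Squassina and Van~Schaftingen, with the attribution ``In \cite{marco-jean}, Squassina and Van Schaftingen recently proved the following'' immediately preceding the statement. So there is no proof in the paper to compare your proposal against.

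That said, your outline is the right framework. Ekeland's principle on the path space $(\Gamma,d)$ followed by a pseudo-gradient deformation localized near the peak set is exactly how quantitative minimax principles of this type are established, and your treatment of the $u$--$w$ part is essentially correct: one obtains $\widetilde\gamma$ with $d(\gamma,\widetilde\gamma)\le\delta$ and near-minimality slope $2\varepsilon/\delta$, picks $w=\widetilde\gamma(t_0)$ at a peak, and the standard deformation delivers $u$.

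Where your sketch stops short is precisely where you say it does. Your proposal to ``apply the same argument to $\Psi\circ\widetilde\gamma$'' at the \emph{same} parameter $t_0$ does not work as written: $\Psi\circ\widetilde\gamma$ need not inherit any near-minimality from $\widetilde\gamma$ (the Ekeland inequality for $\widetilde\gamma$ compares $J(\widetilde\gamma)$ to $J(\beta)$ via $d(\widetilde\gamma,\beta)$, and a small deformation $\beta$ of $\Psi\circ\widetilde\gamma$ can be arbitrarily far from $\widetilde\gamma$), nor is $t_0$ in general a peak parameter for $\Psi\circ\widetilde\gamma$. The actual argument in \cite{marco-jean} resolves this by a \emph{second} application of Ekeland's principle, this time starting from $\Psi\circ\widetilde\gamma$, to produce a further path $\widehat\gamma$ within $\delta$ of $\Psi\circ\widetilde\gamma$ that is itself near-minimal; one then selects $t_0$ as a peak of $\widehat\gamma$, sets $w:=\widetilde\gamma(t_0)$, and uses the chain $c\le J(\widehat\gamma)\le J(\Psi\circ\widetilde\gamma)\le J(\widetilde\gamma)\le c+\varepsilon$ together with $\varphi\circ\Psi\le\varphi$ to force $t_0$ to lie in the peak set of $\widetilde\gamma$ as well. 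This is what makes the single $w$ serve both purposes and accounts for the asymmetric constants $3\delta$ versus $2\delta$ in (b.1) and (b.3). Your diagnosis of the obstacle is accurate; what is missing is this two-stage Ekeland step that couples the peak parameters.
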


\noindent
We now prove the following

\begin{lemma}
	\label{invar}
Assume that $\sigma_H(\Omega)=\Omega$ with respect to some $H\in {\mathcal H}_0$
and that $p:\overline{\Omega}\to(1,+\infty)$ and $\mu:\overline{\Omega}\to\R^+$ are continuous functions such that
\begin{equation}
	\label{symmcond}
p(\sigma_H(x))=p(x),\quad \mu(\sigma_H(x))=\mu(x),\quad\text{for all $x\in\Omega$.}
\end{equation}
Then
$$
\int_\Omega \mu(x)|Du^H|^{p(x)}=\int_\Omega \mu(x)|Du|^{p(x)},\qquad \text{for all $u\in W^{1,p(x)}_0(\Omega)$.}
$$
Similarly
$$
\int_\Omega \mu(x)|u^H|^{p(x)}=\int_\Omega \mu(x)|u|^{p(x)},\qquad \text{for all $u\in L^{p(x)}(\Omega)$.}
$$
\end{lemma}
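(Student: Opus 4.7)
The plan is to reduce both identities to the same underlying combinatorial fact about polarization: for every measurable function $u$ on $\Omega$, the unordered pair $\{u^H(x), u^H(\sigma_H(x))\}$ equals $\{u(x), u(\sigma_H(x))\}$ for every $x \in \Omega$, and for every $u \in W^{1,p(x)}_0(\Omega)$, the analogous identity $\{|Du^H(x)|, |Du^H(\sigma_H(x))|\} = \{|Du(x)|, |Du(\sigma_H(x))|\}$ holds for a.e.\ $x \in \Omega$. Once these pointwise pair identities are in place, the two equalities follow by splitting $\Omega = (H \cap \Omega) \cup ((\R^N \setminus H) \cap \Omega)$, applying the change of variables $y = \sigma_H(x)$ on the second piece, and using the invariance hypotheses $\mu \circ \sigma_H = \mu$ and $p\circ \sigma_H = p$ together with $|\det D\sigma_H| = 1$.

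First I would treat the $L^{p(x)}$ statement, which does not involve derivatives. For $x \in H \cap \Omega$, the definition \eqref{polari-zz} gives
$$
|u^H(x)|^{p(x)} + |u^H(\sigma_H(x))|^{p(\sigma_H(x))} = |u(x)|^{p(x)} + |u(\sigma_H(x))|^{p(\sigma_H(x))},
$$
because taking the max and min of two real numbers just permutes them, and $p \circ \sigma_H = p$. Multiplying by $\mu(x) = \mu(\sigma_H(x))$ and integrating over $H \cap \Omega$, then using the isometry $\sigma_H$ to re-express the second summand as an integral over $(\R^N\setminus H)\cap\Omega$, yields the claim.

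For the gradient identity the same bookkeeping works, but one must first justify that $u^H \in W^{1,p(x)}_0(\Omega)$ and that the unordered pair of gradient moduli is preserved a.e. To this end I would split $H\cap\Omega$ into the measurable sets $A = \{x \in H\cap\Omega : u(x) \geq u(\sigma_H(x))\}$ and $B = (H\cap\Omega)\setminus A$. On $A$ one has $u^H = u$ and on $\sigma_H(A) \subset \R^N\setminus H$ one has $u^H = u\circ \sigma_H$; symmetrically on $B$ and $\sigma_H(B)$. Standard lattice properties of Sobolev functions applied to $W^{1,p(x)}_0(\Omega) \hookrightarrow W^{1,1}_0(\Omega)$ give $Du^H = Du$ a.e.\ on $A$, $Du^H = (D\sigma_H)^\top Du \circ \sigma_H$ a.e.\ on $\sigma_H(A)$, and analogously on $B$, $\sigma_H(B)$; on the coincidence set $\{u = u\circ\sigma_H\}$ one invokes Stampacchia's lemma to show that $Du$ and $D(u\circ\sigma_H)$ agree a.e., so the assignment is unambiguous. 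Since $\sigma_H$ is a linear isometry, $|Du^H|$ and $|Du|\circ\sigma_H$ swap values in the prescribed way. Combined with $\mu\circ\sigma_H = \mu$ and $p\circ\sigma_H = p$, the same two-piece integration as above delivers the equality of modulars.

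The genuinely delicate point is the a.e.\ pointwise identification of $Du^H$, in particular handling the set where $u = u\circ \sigma_H$; everything else is routine manipulation. Fortunately, this step is insensitive to the variable exponent: it only uses membership in $W^{1,1}_{\mathrm{loc}}$ and the chain rule for the isometry $\sigma_H$, so known polarization lemmas for classical Sobolev spaces transfer verbatim. The log-Hölder continuity of $p$ does not intervene in the proof of this lemma, only in the density of smooth functions used elsewhere in the paper.
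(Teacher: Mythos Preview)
Your proof is correct and follows essentially the same route as the paper's: both arguments introduce the reflected function $v=u\circ\sigma_H$, split $H\cap\Omega$ according to whether $u>v$ or $u\leq v$, identify $Du^H$ piecewise using the lattice/chain-rule properties of Sobolev functions and the fact that $\sigma_H$ is an isometry, and then reassemble the integral via the change of variables $y=\sigma_H(x)$ together with the invariance of $\mu$ and $p$. Your framing in terms of the unordered pair $\{|Du^H(x)|,|Du^H(\sigma_H(x))|\}$ is a tidy way of stating the same bookkeeping the paper carries out explicitly.
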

\begin{proof}
	If $u\in W^{1,p(x)}_0(\Omega)$ and $H\in {\mathcal H}_0$, it follows
	that $u^H\in W^{1,p(x)}_0(\Omega)$. To prove this, it is sufficient to argue as in the beginning of the proof of
	\cite[Proposition 2.3]{smetswillem} for the case $\Omega=\R^N$ and then recall that by definition
    $u^H=(\tilde u)^H|_{\Omega}$ and $(\tilde u)^H|_{\R^N\setminus\Omega}=0$, being $\sigma_H(\Omega)=\Omega$.
    Setting $v(x):=u(\sigma_H(x))$ and $w(x):=u^H(\sigma_H(x))$, it follows that $v,w$ belong to $W^{1,p(x)}_0(\Omega)$ and
	\begin{equation}
		\label{derivform}
		D u^H(x)=
		\begin{cases}
			D u(x) & \text{if $x\in \{u>v\}\cap H\cap \Omega$}, \\
			D v(x) & \text{if $x\in \{u\leq v\}\cap H \cap \Omega$},
		\end{cases}
		\qquad
		D w(x)=
		\begin{cases}
			D v(x) & \text{if $x\in \{u>v\}\cap H\cap \Omega$}, \\
			D u(x) & \text{if $x\in \{u\leq v\}\cap H\cap \Omega$}.
		\end{cases}
	\end{equation}
	and, for $x\in H\cap\Omega$, we have
	$u^H(x)=v(x)+(u(x)-v(x))^+$ and $w(x)=u(x)-(u(x)-v(x))^+$.
	Writing down $\sigma_H$ as $\sigma_H(x)=x_0+Rx$, where $R$ is an orthogonal linear transformation (symmetric, as reflection),
	taking into account that $|{\rm det}R|=1$ and
	$|D v(x)|=|D (u(\sigma_H(x)))|=|R(D u(\sigma_H(x)))|=|(D u)(\sigma_H(x))|$
	(and the analogous formula for $|Dw(x)|=|(D u^H)(\sigma_H(x))|$) recalling \eqref{symmcond}, \eqref{derivform} and that
	$$
	\Omega\cap(\R^N\setminus H)=\sigma_H(\Omega\cap H),
	$$
	we have
	\begin{align*}
		\int_\Omega \mu(x)|Du|^{p(x)} & =\int_{H\cap\Omega}\mu(x)|Du|^{p(x)}+\int_{H\cap\Omega}\mu(x)|(D u)(\sigma_H(x))|^{p(x)} \\
&		=\int_{H\cap\Omega}\mu(x)|Du|^{p(x)}+\int_{H\cap\Omega}\mu(x)|D v|^{p(x)} \\
&		=\int_{\{u>v\}\cap H\cap\Omega}\mu(x)|Du|^{p(x)}+\int_{\{u>v\}\cap H\cap\Omega}\mu(x)|Dv|^{p(x)} \\
&		+\int_{\{u\leq v\}\cap H\cap\Omega}\mu(x)|Dv|^{p(x)}+\int_{\{u\leq v\}\cap H\cap\Omega}\mu(x)|Du|^{p(x)} \\
&	=\int_{H\cap\Omega}\mu(x)|Du^H|^{p(x)}+\int_{H\cap\Omega}\mu(x)|Dw|^{p(x)}=\int_{\Omega}\mu(x)|Du^H|^{p(x)}.
	\end{align*}
	This concludes the proof.
\end{proof}

\vskip5pt
\noindent
We can now prove the following

\begin{lemma}
	\label{continpol}
Assume that $\sigma_H(\Omega)=\Omega$ with respect to some $H\in {\mathcal H}_0$	
and that $p:\overline{\Omega}\to(1,+\infty)$ is a continuous functions such that
\begin{equation}
	\label{symmcondbis}
p(\sigma_H(x))=p(x),\qquad\text{for all $x\in\Omega$.}
\end{equation}
Then the map
$$
\Psi:W^{1,p(x)}_0(\Omega)\to W^{1,p(x)}_0(\Omega),\qquad
u\mapsto u^H
$$
is well defined and continuous.
\end{lemma}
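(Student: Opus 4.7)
The plan is to establish first that $\Psi$ is an isometry of $W^{1,p(x)}_0(\Omega)$, and then to deduce continuity by combining this with the uniform convexity of the target space via the Radon--Riesz property.

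\textit{Isometry.} That $u^H\in W^{1,p(x)}_0(\Omega)$ whenever $u\in W^{1,p(x)}_0(\Omega)$ was already observed at the beginning of the proof of Lemma~\ref{invar}, by adapting the argument of \cite[Proposition 2.3]{smetswillem}. Since the rescaled function $u/\lambda$ satisfies $(u/\lambda)^H = u^H/\lambda$ for every $\lambda>0$, applying Lemma~\ref{invar} with $\mu\equiv 1$ yields
\begin{equation*}
\int_\Omega\Big|\frac{u^H}{\lambda}\Big|^{p(x)}=\int_\Omega\Big|\frac{u}{\lambda}\Big|^{p(x)},\qquad
\int_\Omega\Big|\frac{Du^H}{\lambda}\Big|^{p(x)}=\int_\Omega\Big|\frac{Du}{\lambda}\Big|^{p(x)},
\end{equation*}
i.e.\ the $p(\cdot)$-modulars of $u^H$ and $Du^H$ coincide with those of $u$ and $Du$ for every $\lambda>0$. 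By the very definition of the Luxemburg norm this forces $\|u^H\|_{1,p(x)}=\|u\|_{1,p(x)}$, so $\Psi$ is an isometry of $W^{1,p(x)}_0(\Omega)$.

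\textit{Continuity.} Suppose $u_n\to u$ in $W^{1,p(x)}_0(\Omega)$. By the isometry just proved, $(u_n^H)$ is bounded in $W^{1,p(x)}_0(\Omega)$; up to a subsequence, $u_n^H\rightharpoonup w$ weakly in $W^{1,p(x)}_0(\Omega)$, and by the compact embedding $W^{1,p(x)}_0(\Omega)\hookrightarrow L^{p(x)}(\Omega)$ also $u_n^H\to w$ strongly in $L^{p(x)}(\Omega)$ and a.e.\ on $\Omega$, for some $w\in W^{1,p(x)}_0(\Omega)$. After a further extraction one may assume $u_n\to u$ a.e.\ on $\Omega$, and hence also $u_n\circ\sigma_H\to u\circ\sigma_H$ a.e.\ by measure-preservation of $\sigma_H$. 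The continuity of the $\max$ and $\min$ operations appearing in formula~\eqref{polari-zz} then forces $u_n^H\to u^H$ a.e.\ on $\Omega$, so that $w=u^H$. Moreover, by the isometry,
\begin{equation*}
\|u_n^H\|_{1,p(x)}=\|u_n\|_{1,p(x)}\longrightarrow \|u\|_{1,p(x)}=\|u^H\|_{1,p(x)}.
\end{equation*}
Since $L^{p(x)}(\Omega)$, and hence $W^{1,p(x)}_0(\Omega)$, is uniformly convex under the standing assumption $1<p_-\leq p_+<\infty$ (see \cite{book}), the Radon--Riesz property---weak convergence combined with convergence of norms implies strong convergence---yields $u_n^H\to u^H$ strongly in $W^{1,p(x)}_0(\Omega)$ along the chosen subsequence. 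The usual subsequence principle then upgrades this to convergence of the whole sequence.

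\textit{Main obstacle.} The only delicate step is the identification of the weak limit $w$ with $u^H$, which is carried out through a.e.\ convergence and the continuity of the lattice operations defining polarization. The appeal to uniform convexity is what lets us avoid any direct pointwise analysis of the gradients $Du_n^H$; such an analysis would be problematic on the coincidence set $\{u=u\circ\sigma_H\}$, where the derivative formula \eqref{derivform} involves an ambiguous selection rule and would otherwise require a Stampacchia-type argument.
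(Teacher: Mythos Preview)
Your proof is correct and follows essentially the same route as the paper: establish that polarization preserves the $W^{1,p(x)}_0$ norm via Lemma~\ref{invar}, extract a weakly convergent subsequence, identify the weak limit as $u^H$, and conclude by uniform convexity. The only difference is in the identification step: the paper uses the $L^{p_-}$-contractivity of polarization (citing \cite[Proposition~2.3]{jvs}) together with the continuous embedding $L^{p(x)}\hookrightarrow L^{p_-}$, whereas you argue via a.e.\ convergence and the continuity of $\max/\min$ in \eqref{polari-zz}; both are equally valid, and your argument has the mild advantage of being self-contained.
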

\begin{proof}
Let $(u_j)\subset W^{1,p(x)}_0(\Omega)$ be a sequence which strongly converges to some $u_0\in W^{1,p(x)}_0(\Omega)$.
Observe that, for every fixed $\lambda>0$, by applying Lemma~\ref{invar} with $\mu(x):=\lambda^{-p(x)}$ we have
\begin{equation}
	\label{uguaglianz}
\int_\Omega \Big(\frac{|Du^H_j|}{\lambda}\Big)^{p(x)}=
\int_\Omega \Big(\frac{|Du_j|}{\lambda}\Big)^{p(x)},\qquad \text{for all $j\geq 1$.}
\end{equation}
Then, by the arbitrariness of $\lambda$ and the definition on $\|\cdot\|_{{L^{p(x)}}}$, there holds
$$
\sup_{j\geq 1}\|Du_j^H\|_{L^{p(x)}}=\sup_{j\geq 1}\|Du_j\|_{L^{p(x)}}<+\infty.
$$
Since $(u_j^H)$ is bounded in the reflexive space $W^{1,p(x)}_0(\Omega)$, up to a subsequence, there
exists $w\in W^{1,p(x)}_0(\Omega)$ such that $(u_j^H)$ converges weakly to $w$
as $j\to\infty$. Observe now that, since the polarization is contractive for $L^m(\Omega)$-spaces (precisely,
see \cite[Proposition 2.3]{jvs}, case of totally invariant domains) and
since the injection $i:L^{p(x)}(\Omega)\to L^{p_-}(\Omega)$ is continuous, for all $j\geq 1$
\begin{align*}
\|u_j^H-u_0^H\|_{L^{p_-}(\Omega)}\leq \|u_j-u_0\|_{L^{p_-}(\Omega)}\leq C\|u_j-u_0\|_{L^{p(x)}(\Omega)}\leq
C\|u_j-u_0\|_{W^{1,p(x)}_0(\Omega)},
\end{align*}
where in the last inequality we used Poincar\'e inequality.
Hence $u_j^H$ converges to $u_0^H$ strongly in $L^{p_-}(\Omega)$.
Hence $w=u_0^H$. In conclusion
$$
u_j^H\rightharpoonup u_0^H\,\,\,\,\text{in $W^{1,p(x)}_0(\Omega)$ as $j\to\infty$},
\quad
{\rm and}
\quad
\lim_{j\to\infty}\|Du^H_j\|_{L^{p(x)}(\Omega)}=\|Du^H_0\|_{L^{p(x)}(\Omega)}.
$$
Since $W^{1,p(x)}_0(\Omega)$ is uniformly convex (see, for instance, \cite[Theorem 8.1.6,
p.243]{book}), we can finally conclude that $u_j^H\to u_0^H$ as $j\to\infty$ in $W^{1,p(x)}_0(\Omega)$.
\end{proof}

\subsection{Proof of Theorem~\ref{mainth1} concluded}
With the above results, apply Lemma~\ref{propositionAbstractSymmetricMinimax} by taking
\begin{equation}
	\label{frameworkMP}
X:=W^{1,p(x)}_0(\Omega),\quad M:=[0,1],\quad M_0:=\{0,1\},\quad\Gamma_0=\{0,\xi\}
\end{equation}
with $\xi\geq 0$ a fixed function with $\xi^H=\xi$ and $\varphi(\xi)<0$ (for an explicit
construction of a function $\xi$ satisfying these conditions, see \cite[bottom of p.613]{chabr}) and hence
$$
\Gamma=\big\{\gamma\in C([0,1],W^{1,p(x)}_0(\Omega)):\gamma(0)=0,\,\,\gamma(1)=\xi\}.
$$
It is readily seen that the functional $\varphi$ introduced in \eqref{deffunct} is $C^1$ smooth.
Furthermore,
\[
  c=\inf_{\gamma \in \Gamma} \sup_{t\in [0,1]} \varphi (\gamma(t))>0=\max\{\varphi(0),\varphi(\xi)\}=
\sup_{\gamma_0 \in \{0,\xi\}} \sup_{t \in \{0,1\}} \varphi(\gamma_0(t))=a.
\]
where the first inequality (namely the Mountain-pass geometry of $\varphi$)
can be proved by arguing exactly as in \cite[pp.612-613]{chabr}. In light of Lemma~\ref{continpol}
the polarization map is continuous. Also by using again Lemma~\ref{invar} with the choices $\mu(x)=p(x)^{-1}$,
$\mu(x)=\frac{V(x)}{p(x)}$ and $\mu(x)=\frac{V(x)}{q(x)+1}$ respectively (notice that, on account of \eqref{symmc}
any of these choices of $\mu$ remain invariant under reflection with respect to $\partial H$), we have
\begin{align*}
\varphi(u^H)&=\int_\Omega\frac{|Du^H|^{p(x)}}{p(x)}+\int_\Omega \frac{V(x)}{p(x)} |u^H|^{p(x)}-\int_\Omega \frac{K(x)}{q(x)+1} |(u^+)^H|^{q(x)+1} \\
&=\int_\Omega\frac{|Du|^{p(x)}}{p(x)}+\int_\Omega \frac{V(x)}{p(x)} |u|^{p(x)}-\int_\Omega \frac{K(x)}{q(x)+1} |(u^+)|^{q(x)+1}=\varphi(u)
\end{align*}
for every $u\in W^{1,p(x)}_0(\Omega)$. Finally, $\Psi(\Gamma)\subset\Gamma$ since for every $\gamma\in\Gamma$ it follows,
again in view of Lemma~\ref{continpol}, that $\gamma^H\in C([0,1],W^{1,p(x)}_0(\Omega))$ and
$\gamma^H(0)=(\gamma(0))^H=0^H=0$ and $\gamma^H(1)=(\gamma(1))^H=\xi^H=\xi$.
By the definition of $c$ we can find a sequence of curves $(\gamma_j)\subset\Gamma$ such that
\[
 \sup_{t\in [0,1]} \varphi(\gamma_j([0, 1])) \le c + 1/j^2.
\]
Apply now Lemma~\ref{propositionAbstractSymmetricMinimax} with
\(\delta_j=1/j\), \(\eps_j= 1/j^2\) and
and obtain three sequences \((u_j)\), \( (v_j)\)
and \( (w_j)\) in \(W^{1,p(x)}_0(\Omega)\) with
$\lim\limits_{j} \varphi (u_j) = \lim\limits_{j} \varphi (v_j) = c$,
$\lim\limits_{j} \varphi' (u_j) = \lim\limits_{j} \varphi' (v_j) = 0$
and
$$
\lim_{j} \norm{u_j - w_j}_{W^{1,p(x)}_0(\Omega)} =0,\qquad
\lim_{j} \norm{v_j - w_j^H}_{W^{1,p(x)}_0(\Omega)} = 0.
$$
Since \(\varphi\) satisfies the Palais-Smale condition (to this regard, we refer the reader to
\cite[pp.614-615]{chabr}, our functional is included in the framework covered therein), up to a subsequence, \( (u_j)\) converges
to some $u\in W^{1,p(x)}_0(\Omega)$. Hence, the sequence \((w_j)\) also converges to \(u\). By continuity
of the polarization, \((v_j)\)  converges to \(u^H\). The conclusion follows since $\varphi$ is of class $C^1$. \qed

\section{Proof of Theorem~\ref{mainth2}}

\noindent
We recall a definition from~\cite{jvs}.
Let $X$ and $V$ be two Banach spaces and $S\subset X$.
We consider two maps $*:S\to V$, $u\mapsto u^*$
({\em symmetrization map}) and $h:S\times {\mathcal H}_0\to S$,
$(u,H)\mapsto u^H$ ({\em polarization map}), where ${\mathcal H}_0$
is a path-connected topological space. We assume:
\begin{enumerate}
 \item $X$ is continuously embedded in $V$;
 \item $h$ is a continuous mapping;
\item for each $u\in S$ and $H\in {\mathcal H}_0$ it holds $(u^*)^H=(u^H)^*=u^*$ and $u^{HH}=u^H$;
\item there exists a sequence $(H_m)$ in ${\mathcal H}_0$ such that, for $u\in S$, $u^{H_1\cdots H_m}$ converges
to $u^*$ in $V$;
\item for every $u,v\in S$ and $H\in {\mathcal H}_0$ it holds
$\|u^H-v^H\|_V\leq \|u-v\|_V$.
\end{enumerate}
%Furthermore $*:S\to V$ can be extended to the whole space $X$ by
%setting $u^*:=(\Theta(u))^*$ for all $u\in X$, where $\Theta:(X,\|\cdot\|_V)\to (S,\|\cdot\|_V)$ is
%a Lipschitz function such that $\Theta|_{S}={\rm Id}|_{S}$.

We recall the main result of \cite{jvs}.

\begin{lemma}
\label{mpconc-symm}
Let $X$ and $V$ be two Banach spaces, $S\subset X$, $*$ and ${\mathcal H}_0$
satisfying the requirements of the abstract symmetrization framework.
Let $\varphi:X\to\R$ a $C^1$ functional
Let $M$ be a metric space and $M_0$ a closed subset of $M$
and $\Gamma_0\subset C(M_0,X)$. Let us define
$$
\Gamma=\big\{\gamma\in C(M,X):\,\,\,\gamma|_{M_0}\in \Gamma_0\big\}.
$$
Assume that
$$
+\infty>c=\inf_{\gamma\in\Gamma}\sup_{\tau\in M} \varphi(\gamma(\tau))
>\sup_{\gamma_0\in \Gamma_0}\sup_{\tau\in M_0} \varphi(\gamma_0(\tau))=a,
$$
and that
$$
\forall H\in {\mathcal H}_0,\,\, \forall u\in S:\quad
\varphi(u^H)\leq \varphi(u).
$$
Then, for every $\eps\in(0,(c-a)/2)$, every $\delta>0$ and $\gamma\in \Gamma$ such that
$$
\sup_{\tau\in M}\varphi(\gamma(\tau))\leq c+\eps,\quad \gamma(M)\subset S,\quad
\text{$\gamma|_{M_0}^{H_0}\in\Gamma_0$ for some $H_0\in {\mathcal H}_0$},
$$
there exists $u\in X$ such that
\begin{equation*}
 c-2\eps\leq \varphi(u)\leq c+2\eps,\quad
\|d\varphi(u)\|\leq 8\eps/\delta,\quad
\|u-u^*\|_V\leq K\delta,
\end{equation*}
being $K$ a constant depending upon the embedding $i:X\to V$,
\end{lemma}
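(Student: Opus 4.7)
The plan is to apply Lemma~\ref{propositionAbstractSymmetricMinimax} with a map $\Psi:X\to X$ chosen so that $\Psi(w)$ becomes a uniformly good $V$-approximation of the symmetrization $w^*$, and then read off the conclusion from the triple the lemma produces. The natural choice is the finite composition $\Psi(u):=u^{H_1 H_2\cdots H_m}$, where $(H_m)$ is the sequence supplied by property~(4) of the symmetrization framework. Indeed, property~(4) gives $u^{H_1\cdots H_m}\to u^*$ in $V$ pointwise, and the non-expansiveness in property~(5) upgrades this to uniform convergence on compact subsets of $V$. Since $\gamma(M)$ is compact in $X$ — as happens when $M$ is compact (the case $M=[0,1]$ being the relevant one for Theorem~\ref{mainth2}) — hence compact in $V$ via the embedding $i:X\hookrightarrow V$, one can fix $m=m(\delta)$ so large that
\[
\sup_{z\in N_{4\delta}(\gamma(M))}\|z^{H_1\cdots H_m}-z^*\|_V\leq \delta,
\]
where $N_{4\delta}(\gamma(M))$ denotes the closed $4\delta$-neighbourhood of $\gamma(M)$ in $X$.

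Next I would verify that this $\Psi$ satisfies the hypotheses of Lemma~\ref{propositionAbstractSymmetricMinimax}: continuity of $\Psi:X\to X$ follows from property~(2) by composition; the monotonicity $\varphi\circ\Psi\leq\varphi$ is immediate from iterating the assumption $\varphi(u^H)\leq\varphi(u)$; and the inclusion $\Psi(\Gamma)\subset\Gamma$ is where the boundary hypothesis $\gamma|_{M_0}^{H_0}\in\Gamma_0$, together with the (implicit) stability of $\Gamma_0$ under the polarizations $(H_j)$, come into play. The lemma then delivers triples $(u,v,w)\in X^3$ satisfying properties a.1)--c.2).

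The candidate for the conclusion is the element $v$: the bounds $c-2\eps\leq\varphi(v)\leq c+2\eps$ and $\|d\varphi(v)\|\leq 8\eps/\delta$ are exactly a.2) and the Palais--Smale-type estimate for $v$. The symmetrization bound $\|v-v^*\|_V\leq K\delta$ rests on the identity $(\Psi(w))^*=w^*$, which is property~(3) iterated along the composition defining $\Psi$. Using this identity inside the triangle inequality,
\[
\|v-v^*\|_V\leq \|v-\Psi(w)\|_V+\|\Psi(w)-(\Psi(w))^*\|_V+\|(\Psi(w))^*-v^*\|_V,
\]
the first and third terms are each $\leq 2C\delta$ via b.3), the embedding $i:X\hookrightarrow V$, and the non-expansiveness of $*$ in $V$ (inherited from property~(5) by passing to the limit in property~(4)); the middle term equals $\|\Psi(w)-w^*\|_V\leq\delta$ by the choice of $m$, since b.1)--b.2) place $w$ inside $N_{4\delta}(\gamma(M))$. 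Thus $K=4C+1$ works, with $C$ the norm of $i:X\hookrightarrow V$.

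The main obstacle I anticipate is the abstract verification of $\Psi(\Gamma)\subset\Gamma$. In applications such as Theorem~\ref{mainth2} the boundary set $\Gamma_0$ consists of functions fixed by every polarization (e.g.\ $\xi^H=\xi$), making the inclusion trivial; in the abstract setting one must either invoke the path-connectedness of $\mathcal{H}_0$ to interpolate $H_0$ with each $H_j$ while keeping $\gamma|_{M_0}$ inside $\Gamma_0$, or build such an invariance directly into the framework. A milder secondary point is ensuring $w\in S$ so that $\Psi(w)$ and $w^*$ are defined; this will follow from $S$ being closed in $X$ once $\delta$ is small enough compared to $\dist(\partial S,\gamma(M))$.
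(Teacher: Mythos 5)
First, note that the paper itself offers no proof of this lemma: it is quoted verbatim as ``the main result of \cite{jvs}'', so there is nothing internal to compare your argument against. Taken on its own terms, your idea of deducing the statement from Lemma~\ref{propositionAbstractSymmetricMinimax} by taking $\Psi=(\cdot)^{H_1\cdots H_m}$ is natural, and the quantitative part is essentially right: properties a.2) and c.2) give the level and derivative bounds for $v$, the identity $(\Psi(w))^*=w^*$ follows by iterating property (3), the non-expansiveness of $*$ on $V$ follows from (5) by passing to the limit in (4), and the triangle inequality combined with b.2)--b.3) yields $\|v-v^*\|_V\leq K\delta$. Two small repairs are needed there: the $4\delta$-neighbourhood of $\gamma(M)$ in $X$ is not compact, so uniform convergence on compacta does not apply to it directly --- you must first choose $m$ so that $\sup_{z\in\gamma(M)}\|z^{H_1\cdots H_m}-z^*\|_V\leq\delta$ (which needs $\gamma(M)$ compact, i.e.\ $M$ compact, a hypothesis absent from the statement as transcribed but present in the source and in the application $M=[0,1]$) and then pass to the neighbourhood by one more use of (5), at the price of a larger $K$; and $\Psi$ must be defined and continuous on all of $X$, whereas $h$ is only given on $S\times\mathcal{H}_0$, which is harmless only because $S=X$ in the application.

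The genuine gap is the one you flag but do not close: the inclusion $\Psi(\Gamma)\subset\Gamma$. Lemma~\ref{propositionAbstractSymmetricMinimax} requires the \emph{whole class} $\Gamma$ to be invariant under the \emph{full composition} $(\cdot)^{H_1\cdots H_m}$, while the hypothesis of the present lemma only provides $\gamma|_{M_0}^{H_0}\in\Gamma_0$ for a \emph{single} polarizer $H_0$ and a \emph{single} path $\gamma$; there is no assumption that $\Gamma_0$ is stable under $(\cdot)^{H_j}$, let alone under their composition. This is not a technicality: it is exactly the point where Van Schaftingen's proof in \cite{jvs} has to work, using the path-connectedness of $\mathcal{H}_0$ and the joint continuity of $h$ on $S\times\mathcal{H}_0$ (assumptions (1)--(2) of the framework, which your argument never invokes) to replace $\gamma$ by an admissible competitor that interpolates through polarizations near $M_0$. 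Your proposed fixes are stated as alternatives rather than carried out, so as a proof of the abstract lemma the argument is incomplete. It does, however, prove everything this paper actually uses, since in the proof of Theorem~\ref{mainth2} one has $\Gamma_0=\{0,\xi\}$ with $0^H=0$ and $\xi^H=\xi$ for every $H\in\mathcal{H}_0$, so $\Psi(\Gamma)\subset\Gamma$ is trivial; if you restrict the statement to boundary data fixed by all polarizations, your derivation is a clean and legitimately different route that recycles the deformation machinery of Lemma~\ref{propositionAbstractSymmetricMinimax} instead of re-running an Ekeland-type argument.
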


\begin{lemma}
	\label{compatib}
	Assume that $\sigma_H(\Omega)=\Omega$ for all $H\in {\mathcal H}_0$
	and that~\eqref{symmc} holds for any $H\in {\mathcal H}_0$.
Then the choice $X:=S=W^{1,p(x)}_0(\Omega)$ and $V:=L^{p_-}(\Omega)$ endowed with the natural norms is compatible
with abstract symmetrization framework.	
\end{lemma}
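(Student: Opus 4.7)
The plan is to verify in turn the five axioms of the abstract symmetrization framework from \cite{jvs}, taking as symmetrization map $*$ the spherical cap symmetrization (which is compatible with $\Omega$ since $\sigma_H(\Omega)=\Omega$ for every $H\in{\mathcal H}_0$) and as polarization map the one defined in \eqref{polari-zz}. The continuous embedding $W^{1,p(x)}_0(\Omega)\hookrightarrow L^{p_-}(\Omega)$ required by axiom (1) follows by combining the Poincar\'e inequality recalled in Section 2 with the continuous inclusion $L^{p(x)}(\Omega)\hookrightarrow L^{p_-}(\Omega)$. That $u\mapsto u^H$ actually maps $W^{1,p(x)}_0(\Omega)$ into itself, and does so continuously in $u$ for each fixed $H\in{\mathcal H}_0$, is precisely the content of Lemma~\ref{continpol}; joint continuity in $(u,H)$ then follows by combining this with the standard continuous dependence of the polarization on the polarizer, thereby giving axiom (2).

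Axiom (3) reduces to the identity $u^{HH}=u^H$, which is immediate from \eqref{polari-zz}, and to $(u^*)^H=u^*=(u^H)^*$, which is exactly the characterization of the spherical cap symmetrization as the unique rearrangement invariant under every polarization by an $H\in{\mathcal H}_0$. Axioms (4) and (5) are statements about the target space $V=L^{p_-}(\Omega)$, which is a classical Lebesgue space: the convergence of iterated polarizations $u^{H_1\cdots H_m}\to u^*$ in $L^{p_-}(\Omega)$ is the density result of \cite{jvs}, and the contractivity $\|u^H-v^H\|_{L^{p_-}}\le \|u-v\|_{L^{p_-}}$ is the classical $L^m$-nonexpansiveness of polarization proved in \cite[Proposition 2.3]{jvs}, applied with $m=p_-$.

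The main obstacle is axiom (2): in the variable exponent setting one cannot invoke off-the-shelf results on the stability of Sobolev spaces under polarization. This is however already settled by Lemmas~\ref{invar} and~\ref{continpol}, whose proofs crucially use the symmetry assumption \eqref{symmc} on $p(\cdot)$ (via Lemma~\ref{invar} with weight $\mu(x)=\lambda^{-p(x)}$) together with the uniform convexity of $W^{1,p(x)}_0(\Omega)$. Once (2) is granted, the remaining verifications are essentially bookkeeping with facts that survive from the classical Lebesgue setting, since the target space $V$ has constant exponent.
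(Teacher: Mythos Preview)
Your outline matches the paper's strategy: verify axioms (1)--(5), with the substantive work concentrated in axiom (2), and cite \cite{jvs} for (3)--(5). The difference lies in how axiom (2) is handled, and here your argument has a gap.

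Axiom (2) demands \emph{joint} continuity of $(u,H)\mapsto u^H$ from $W^{1,p(x)}_0(\Omega)\times{\mathcal H}_0$ into $W^{1,p(x)}_0(\Omega)$. You invoke Lemma~\ref{continpol} for continuity in $u$ at fixed $H$, then assert that joint continuity ``follows by combining this with the standard continuous dependence of the polarization on the polarizer.'' But separate continuity does not imply joint continuity, and the ``standard'' dependence of $u^H$ on $H$ is only available in $L^{p_-}(\Omega)$ (for instance via approximation by smooth compactly supported functions, for which $\vartheta^{H_j}\to\vartheta^{H_0}$ uniformly), not in $W^{1,p(x)}_0(\Omega)$. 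So your two ingredients at best yield joint continuity into $V=L^{p_-}(\Omega)$, which is not what axiom (2) requires.

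The paper closes this gap by rerunning the Lemma~\ref{continpol} argument directly for a sequence $(u_j,H_j)\to(u_0,H_0)$: Lemma~\ref{invar} gives $\|Du_j^{H_j}\|_{L^{p(x)}}=\|Du_j\|_{L^{p(x)}}$, hence boundedness and a weak sublimit $w$ in $W^{1,p(x)}_0(\Omega)$; a triangle-inequality splitting together with contractivity and the smooth-approximation step identifies $w=u_0^{H_0}$ via $L^{p_-}$ convergence; finally, since the norms also converge (again by Lemma~\ref{invar}), uniform convexity upgrades weak to strong convergence in $W^{1,p(x)}_0(\Omega)$. You have named all of these ingredients, but you need to assemble them for varying $H_j$ rather than asserting that the fixed-$H$ lemma already covers it.
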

\begin{proof}
Since $\Omega$ is invariant under reflection with respect to all $H\in {\mathcal H}_0$,
it follows $\Omega$ is invariant under cap symmetrization \cite{jvs}.	
Of course $X$ is continuously embedded into $V$. Let us now prove that $h(u,H):=u^H$ is a continuous mapping
from $X\times {\mathcal H}_0$ to $X$. Here ${\mathcal H}_0$ is meant to be endowed with the metric $d$
introduced in \cite[Definition 2.35]{jvs-topol}, which makes ${\mathcal H}_0$ a separable metric space.
Let $(u_j,H_j)$ be a sequence in	$X\times {\mathcal H}_0$ which
converges to $(u_0,H_0)$. As for identity \eqref{uguaglianz}, for every $\lambda>0$
\begin{equation*}
\int_\Omega \Big(\frac{|Du^{H_j}_j|}{\lambda}\Big)^{p(x)}=
\int_\Omega \Big(\frac{|Du_j|}{\lambda}\Big)^{p(x)},\qquad \text{for all $j\geq 1$.}
\end{equation*}
Then, it follows that $(u^{H_j}_j)$ remains bounded in
$X$ and, up to a subsequence, it converges
to some function $w$ weakly in $X$ (and strongly in $V$
by the compact embedding theorem).
In particular, $(u_j^{H_j})$ converges to $w$ in $L^{p_-}(\Omega)$. On the other hand,
if $(\vartheta_m)\subset C^\infty_c(\Omega)$ is a sequence converging
to $u_0$ strongly in $L^{p_-}(\Omega)$ as $m\to\infty$, for every $j,m\geq 1$, we have
\begin{align*}
\|u_j^{H_j}-u_0^{H_0}\|_{L^{p_-}(\Omega)}&\leq \|u_j^{H_j}-u_0^{H_j}\|_{L^{p_-}(\Omega)}+
\|u_0^{H_j}-u_0^{H_0}\|_{L^{p_-}(\Omega)} \\
&\leq \|u_j-u_0\|_{L^{p_-}(\Omega)}+
\|u_0^{H_j}-\vartheta_m^{H_j}\|_{L^{p_-}(\Omega)} \\
&+
\|\vartheta_m^{H_j}-\vartheta_m^{H_0}\|_{L^{p_-}(\Omega)} +
\|\vartheta_m^{H_0}-u_0^{H_0}\|_{L^{p_-}(\Omega)} \\
&\leq C\|u_j-u_0\|_{L^{p(x)}(\Omega)}+
2\|\vartheta_m-u_0\|_{L^{p_-}(\Omega)}
+\|\vartheta_m^{H_j}-\vartheta_m^{H_0}\|_{L^{p_-}(\Omega)}.
\end{align*}
Letting $j\to\infty$ at $m$ fixed first and then finally $m\to\infty$, it follows that
$(u_j^{H_j})$ converges to $u_0^{H_0}$ in $L^{p_-}(\Omega)$. We also used the fact that
for a fixed compactly supported function $\vartheta$, it holds
$\vartheta^{H_j}$ converges to $\vartheta^{H_0}$ uniformly on $\Omega$ for $j\to\infty$.
By uniqueness, $w=u_0^{H_0}$.  In conclusion
$$
u_j^{H_j}\rightharpoonup u_0^{H_0},\,\,\,\,\text{as $j\to\infty$},
\quad
{\rm and}
\quad
\lim_{j\to\infty}\|Du^{H_j}_j\|_{L^{p(x)}(\Omega)}=\|Du^{H_0}_0\|_{L^{p(x)}(\Omega)}.
$$
Then, since as already remarked $W^{1,p(x)}_0(\Omega)$ is uniformly
convex, we can conclude that $u_j^{H_j}\to u_0^{H_0}$ as $j\to\infty$,
concluding the proof of the continuity of $h$.
Also, for all $u\in X$, $u$ belongs to $L^{p_-}(\Omega)$ and,
in light of~\cite[Theorem 2.1]{jvs}, there exists a sequence $(H_j)\subset {\mathcal H}_0$
such that, for all $u\in L^{\rho}(\Omega)$, $\|u^{H_1\cdots H_j}-u^*\|_{L^{p_-}}\to 0$.
The contractivity of $u^H$ is the space $L^{p_-}(\Omega)$ is a standard fact.
\end{proof}

\subsection{Proof of Theorem~\ref{mainth2} concluded}
On account of Lemma~\ref{compatib}, it is sufficient to argue as for the proof of
Theorem~\ref{mainth1}. Applying Lemma~\ref{mpconc-symm} with the choices \eqref{frameworkMP},
and with \(\delta_j=1/j\) and \(\eps_j= 1/j^2\),
we find $(u_j)\subset W^{1,p(x)}_0(\Omega)$ such that
$\varphi (u_j)\to c$ and  $\varphi' (u_j)\to 0$ as $j\to\infty$
and $\|u_j - u_j^*\|_{L^{p_-}(\Omega)}\to 0$ as $j\to\infty$.
Since, as already pointed out in the proof of Theorem~\ref{mainth1}, \(\varphi\) satisfies the Palais-Smale condition,
up to a subsequence, \( (u_j)\) converges to some $u \in W^{1,p(x)}_0(\Omega)$. Hence $\varphi(u)=c$ and $\varphi'(u)=0$.
Finally, since
\begin{align*}
\|u - u^*\|_{L^{p_-}(\Omega)}&\leq \|u - u_j\|_{L^{p_-}(\Omega)}+\|u_j - u_j^*\|_{L^{p_-}(\Omega)}+\|u^*-u_j^*\|_{L^{p_-}(\Omega)} \\
&\leq 2C\|u - u_j\|_{L^{p(x)}(\Omega)}+\|u_j - u_j^*\|_{L^{p_-}(\Omega)},
\end{align*}
taking into account Poincar\'e inequality, letting $j\to\infty$, yields $u=u^*$.
This concludes the proof. \qed

\section{Proof of Theorem~\ref{radsymm}}

We consider $C^{1,\alpha}$ solutions to problem \eqref{P}. Obviously problem \eqref{P} has to be understood in weak sense, that is
$u \in W^{1,p(\cdot)}_0(\Omega)$  is a  weak solution to \eqref{P} if
\begin{equation}
	\label{eq:fgakghfhksajdgfja}
\int_\Omega{|Du|^{p(x)-2}(Du},  D\varphi)  = \int_\Omega f(|x|,u) \varphi, \qquad\forall \varphi\in C^1_c(\Omega).
\end{equation}
Throughout  this section we shall always assume the assumptions of Theorem~\ref{radsymm}.
\subsection{A summability result}  We have the following

\begin{lemma}\label{le:1/gradient}
Let $u\in C^{1,\alpha}(\overline \Omega)$ be a positive solution  to \eqref{P}.  Then
\[
\int_{\Omega}\frac{1}{|D u|^{(p(x)-1)r}|x-y|^{\gamma}} \leq C,
\]
where $C$ is a positive constant independent of $y$,
$0 \le r < 1$, $\gamma < N-2 $ if $N\geq3$ and $\gamma =0$ if $N=2$.
In particular it follows that the critical set $Z_u=\{x \in \Omega : |D u(x)|=0\}$ has zero Lebesgue measure.
\end{lemma}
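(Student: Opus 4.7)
The strategy, inspired by the Damascelli--Sciunzi approach for the constant exponent $p$-Laplacian, is to exploit the strict positivity of $f(|x|,u)$ on $\{u>0\}$ together with the $C^{1,\alpha}$ regularity of $u$ to derive a Caccioppoli-type estimate for negative powers of $|Du|$ weighted by the Riesz-type kernel $|x-y|^{-\gamma}$. Since $u$ is continuous with $u=0$ on $\partial\Omega$, $u>0$ inside and $f(|x|,s)>0$ for $s>0$, a Hopf boundary lemma (readily obtainable since on $\partial\Omega$ the equation forces $|Du|>0$ via the strict positivity of $f$ at $u=0^+$ through standard barrier arguments) yields a neighborhood $U$ of $\partial\Omega$ where $|Du|\geq c_0>0$. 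Consequently $Z_u\Subset K:=\Omega\setminus U$, and the part of the integral over $U$ is bounded since $|Du|^{-1}$ is bounded and $|x-y|^{-\gamma}$ is locally integrable for $\gamma<N$.

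The heart of the argument is to estimate the integral over $K$. I would use, as a test function in the weak formulation \eqref{eq:fgakghfhksajdgfja}, a regularization of the form
$$
\varphi_{\eps,\tau}(x)=\frac{\eta(x)^2}{\bigl(|Du(x)|^2+\eps\bigr)^{s(x)/2}\,\bigl(|x-y|^2+\tau\bigr)^{\gamma/2}},
$$
where $\eta\in C^\infty_c(\Omega)$ is a cutoff with $\eta\equiv 1$ on a neighbourhood of $K$, and the exponent $s(x)$ is chosen so that, after expanding $D\varphi_{\eps,\tau}$ and inserting it against $|Du|^{p(x)-2}Du$, the dominant term on the left hand side reads
$$
\int_\Omega\frac{\eta^2}{\bigl(|Du|^2+\eps\bigr)^{(p(x)-1)r/2}\,\bigl(|x-y|^2+\tau\bigr)^{\gamma/2}}\,(\text{nonnegative factor}).
$$
The remaining three contributions from $D\varphi_{\eps,\tau}$, coming from $D\eta$, from the negative power of $|Du|^2+\eps$, and from $|x-y|^{-\gamma}$, are handled as follows. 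The $D\eta$ term is supported where $|Du|\geq c_0$ so is trivially bounded. The $|x-y|^{-\gamma-1}$ factor is absorbed via Young's inequality, producing the restriction $\gamma+2<N$ (i.e.\ $\gamma<N-2$) as an integrability condition on the dual weight $|x-y|^{-(\gamma+2)}$ against a bounded factor on $\Omega$. The second-derivative term generated by $D(|Du|^2+\eps)^{-s/2}$ is controlled using the $W^{2,2}_{\mathrm{loc}}$-type regularity for $p(x)$-Laplacian solutions established in \cite{FMS2,MSS}, combined with Cauchy--Schwarz and absorption on the left. The right hand side $\int f\varphi_{\eps,\tau}$ is bounded by $C\int_K (|x-y|^2+\tau)^{-\gamma/2}\leq C'$ uniformly in $\tau,y$ since $\gamma<N$ and $f(|x|,u)$ is bounded.

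Once the estimate is uniform in $\eps,\tau$, Fatou's lemma delivers the required bound as $\eps,\tau\to 0$. The statement that $|Z_u|=0$ then follows: on the set $\{|Du|=0\}$ the integrand equals $+\infty$ for any $r>0$, so a positive measure for $Z_u$ would contradict the integrability.

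The main obstacle is the variable exponent structure: differentiating or multiplying by powers of $|Du|^{p(x)-2}$ produces spurious terms in $\nabla p(x)$, which is precisely why the hypothesis $p\in C^1(\Omega)$ (and boundedness of $|Du|$) is needed. A secondary difficulty is ensuring that $\varphi_{\eps,\tau}\in W^{1,p(\cdot)}_0(\Omega)$, which is handled automatically by the cutoff $\eta$ and the regularizations $\eps,\tau>0$. The constraint $\gamma<N-2$ (rather than $\gamma<N$) is an artifact of the Young-inequality step and matches the scaling of the Green function for the Laplacian in $\R^N$.
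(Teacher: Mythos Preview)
Your overall architecture is right in spirit (cutoff near the boundary via Hopf, regularized negative power of $|Du|$ times the Riesz weight as test function, Fatou at the end), but the mechanism by which the target integral appears is reversed, and this is a genuine gap.

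In the weak formulation $\int_\Omega |Du|^{p(x)-2}(Du,D\varphi)=\int_\Omega f(|x|,u)\varphi$, the desired quantity does \emph{not} emerge from the gradient side. When you expand $D\varphi_{\eps,\tau}$ and pair it with $|Du|^{p(x)-2}Du$, the four contributions are: a Hessian term $|Du|^{p(x)-2}(Du,D|Du|^2)\cdot(\ldots)$ with no sign, a $\log(|Du|^2+\eps)\,Dp$ term (since the exponent is variable), the $D\eta$ term, and the $(x-y)|x-y|^{-2}$ term. None of these produces $\int \eta^2\,(|Du|^2+\eps)^{-(p(x)-1)r/2}(|x-y|^2+\tau)^{-\gamma/2}$ with a nonnegative factor; there is no ``dominant term on the left'' of that shape to absorb into. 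What actually carries the argument is the \emph{right} hand side: since $f(|x|,u)\geq\sigma>0$ on the support of $\eta$, one has $\int f\varphi_{\eps,\tau}\geq \sigma\int\varphi_{\eps,\tau}$, and it is $\int\varphi_{\eps,\tau}$ that is exactly the (regularized) target integral. The gradient side is then an \emph{upper} bound for $\sigma\int\varphi_{\eps,\tau}$, and one must show that each of its four pieces is controlled either by a constant or by a small multiple of the target.

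Two further points. First, the $Dp$ contribution generates a factor $|\log|Du||$, which must be handled (the paper uses $|\log t|\leq C_\delta+t^\delta+t^{-\delta}$); you do not mention this. Second, the Hessian term cannot be closed with plain $W^{2,2}_{\rm loc}$ regularity: after a weighted Young inequality one needs the weighted estimate $\int_{\Omega_0}|Du|^{p(x)-2-\beta(x)}\|D^2u\|^2\,|x-y|^{-\gamma}\leq C$ with $\beta(x)=1-(p(x)-1)(1-r)$, which is precisely the content of \cite[Lemma~3.1]{CLS} (your citations \cite{FMS2,MSS} concern monotonicity in half-spaces and parabolic symmetry, not second-derivative bounds for the $p(x)$-Laplacian). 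The restriction $\gamma<N-2$ enters through this weighted Hessian estimate, not merely through integrability of $|x-y|^{-(\gamma+2)}$.
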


\begin{proof}
We consider, for $y\in\R^N$, the  test function
$$
\psi_\varepsilon(x)= (\varepsilon + |Du|^{(p(x)-1)r})^{-1}\eta {(\varepsilon +|x-y|)^{-\gamma}},
$$
where $\eta$ is a positive smooth cut-off function
with ${\rm supt}(\eta)=\Omega_0$ such that $\eta=1$
on $\tilde\Omega_0\subset\Omega_0$ and $\tilde\Omega_0\subset \subset \Omega$ is such that
$(\Omega \setminus \tilde\Omega_0)\cap Z_u= \emptyset$.
In fact, we recall that, in light of the Hopf boundary Lemma of \cite{Zha},  we have $Z_u\cap\partial\Omega=\emptyset$.
Note that $\psi_\varepsilon$ is a good test function since it belongs to $W^{1,2}(\Omega)$ by the  summability
properties of the solutions proved in   \cite{CLS} and thus it can be plugged into \eqref{eq:fgakghfhksajdgfja} by density arguments.
\noindent
Again by the Hopf boundary Lemma,
to achieve the conclusion, it is enough to show  that \begin{equation}\label{eq:agshaghagsahg}
\int_{\tilde\Omega_0}\frac{1}{|D u|^{(p(x)-1)r}|x-y|^{\gamma}} \leq C\,,
\end{equation}
 for $\tilde\Omega_0\subset \subset \Omega$.
Moreover, without loss of generality,  we can reduce to consider the case
\begin{equation}\label{eq:dhjwvfqjhvfdjaVJ}
\max_{x \in \bar{\Omega}_0}\frac{p(x)-2}{p(x)-1}\leq r<1.
\end{equation}
In fact, once \eqref{eq:agshaghagsahg} holds for $C^{1,\alpha}$ solutions,
the same estimation easily follows for  $r'<r$.
We put $\psi_\eps$ as test function in \eqref{P} and since
$f(|x|,u) \geq \sigma$ for some $\sigma>0$ in the support of $\psi_\eps$, we get
\begin{align*}
&\sigma\int_{\Omega_0}\frac{\eta}{(\varepsilon + |Du|^{(p(x)-1)r})\,(\varepsilon +|x-y|)^{\gamma}}
\leq \int_{\Omega_0} f(|x|,u)\psi_\varepsilon  \\
&\leq  \int_{\Omega_0} |Du|^{p(x)-2} |(Du,D\psi_\varepsilon)|  \\
&\leq  \int_{\Omega_0} (p(x)-1)r\frac{|Du|^{p(x)-2}}{(\varepsilon+ |Du|^{(p(x)-1)r})^2}
 |Du|^{(p(x)-1)r}\frac{1}{(\varepsilon +|x-y|)^\gamma} \eta \|D^2u\|  \\
 &+ \int_{\Omega_0}r |\log|Du||
\frac{|Du|^{p(x)-2}}{(\varepsilon+ |Du|^{(p(x)-1)r})^2}
  |Du|^{(p(x)-1)r+1}
\frac{1}{(\varepsilon +|x-y|)^\gamma}\eta |Dp|\\
&+\int_{\Omega_0} \frac{|Du|^{p(x)-2}}{
(\varepsilon + |Du|^{(p(x)-1)r})}\frac{|Du|\, |D\eta|}{(\varepsilon +|x-y|)^\gamma} \\
 &+\int_{\Omega_0} \gamma \frac{|Du|^{p(x)-2}}{
(\varepsilon + |Du|^{(p(x)-1)r})} \frac{\eta |D u|}{(\varepsilon +|x-y|)^{(\gamma+1)}}.
\end{align*}
Since the critical set $Z_u$ is the zero level set of  $|D u|^{(p(x)-1)r}$, then by Stampacchia's Theorem
the gradient of $|D u|^{(p(x)-1)r}$ vanishes a.e. $Z_u$. In the above calculations we consequently agree that the term $\log |D u|$ make sense outside $Z_u$, while in $Z_u$ the distributional derivatives of $|D u|^{(p(x)-1)r}$ are zero.
\noindent
Taking into account  that
$|\log t| \le C_\delta + t^\delta + t^{-\delta}, \,\, t>0
$ for all $\delta>0$  and some  $C_\delta>0$,  we have
\begin{align}\label{eq:kadhkjhjka}
& \sigma\int_{\Omega_0} \frac{\eta}{(\varepsilon + |Du|^{(p(x)-1)r})(\varepsilon +|x-y|)^{\gamma}}
\\\nonumber &\leq  \int_{\Omega_0} (p(x)-1)r\frac{|Du|^{p(x)-2}}{(\varepsilon+ |Du|^{(p(x)-1)r})^2}
 |Du|^{(p(x)-1)r}\frac{1}{(\varepsilon +|x-y|)^\gamma} \eta \|D^2u\|
\\\nonumber&+ C\int_{{\Omega_0}}\frac{|D u|^{p(x)-1}}{(\varepsilon + |D u|^{(p(x)-1)r})}\frac{1}{(\varepsilon +|x-y|)^{\gamma}}\\
\nonumber&+   C\int_{{\Omega_0}}\frac{|D u|^{p(x)-1+\delta}}{(\varepsilon + |D u|^{(p(x)-1)r})}  \frac{1}{(\varepsilon +|x-y|)^{\gamma}}\\
\nonumber&+ C\int_{{\Omega_0}} \frac{|D u|^{p(x)-1-\delta}}{(\varepsilon + |D u|^{(p(x)-1)r})}  \frac{1}{(\varepsilon +|x-y|)^{\gamma}}\\\nonumber
&+C\int_{{\Omega_0}}\frac{|D u|^{p(x)-1}}{(\varepsilon + |D u|^{(p(x)-1)r})}  \frac{1}{(\varepsilon +|x-y|)^{\gamma+1}}+C,
\end{align}
where $\delta$ was fixed small depending on the size of $p_-$.
Since $u\in C^{1,\alpha}$ and  $\gamma < N-2$,
from \eqref{eq:kadhkjhjka} we get
\begin{equation*}
 \sigma\int_{\Omega_0} \frac{\eta}{(\varepsilon + |Du|^{(p(x)-1)r})(\varepsilon +|x-y|)^{\gamma}}
\leq  C\int_{\Omega_0}\frac{|Du|^{(p(x)-2)+(p(x)-1)r}}{(\varepsilon+ |Du|^{(p(x)-1)r})^2}
 \frac{\eta \|D^2u\|}{(\varepsilon +|x-y|)^\gamma} +C.
\end{equation*}
If $\beta\in C(\bar{\Omega}_0)$ is such that
$\beta(x)=1-(p(x)-1)(1-r)$, with $0\leq \beta(x) <1$
by virtue of \eqref{eq:dhjwvfqjhvfdjaVJ}, by writing
$$
\frac{|Du|^{(p(x)-2)+(p(x)-1)r}}{(\varepsilon+ |Du|^{(p(x)-1)r})^2}
 \frac{\eta \|D^2u\|}{(\varepsilon +|x-y|)^\gamma}
=
\Big[\frac{|Du|^{\frac{3r(p(x)-1)}{2}}}{(\varepsilon+ |Du|^{(p(x)-1)r})^2}
 \frac{\eta^{1/2}}{(\varepsilon +|x-y|)^{\gamma/2}} \Big]\Big[\frac{\eta^{1/2} |Du|^{\frac{p(x)-2-\beta(x)}{2}}\|D^2u
\|}{(\varepsilon +|x-y|)^{\gamma/2}}\Big]
$$
and using a weighted Young inequality, we finally obtain
\begin{gather*}
	%\label{eq:kadaahjhjhkjhjka}
 \sigma\int_{\Omega_0} \frac{\eta}{(\varepsilon + |Du|^{(p(x)-1)r})(\varepsilon +|x-y|)^{\gamma}} \\
\leq \delta' \int_{\Omega_0} \frac{\eta}{(\varepsilon + |Du|^{(p(x)-1)r})(\varepsilon +|x-y|)^{\gamma}}
+\frac {C}{\delta'} \int_{\Omega_0}  |Du|^{p(x)-2-\beta(x)} \|D^2u\|^2
\frac{1}{(\varepsilon +|x-y|)^{\gamma}}+C.
\end{gather*}
Recalling now that, by a variant argument of \cite[Lemma 3.1]{CLS}, we have
\[
\int_{\Omega_0}\frac{|D u|^{p(x)-2-\beta(x)}\|D^2u\|^2}{|x-y|^{\gamma}} \leq C,
\]
by  choosing  $\delta' < \sigma$ we have the desired conclusion  letting $\eps\to 0^+$
and recalling that $\eta=1$ on $\tilde\Omega_0$.
\end{proof}

\subsection{A weighted Sobolev inequality}\label{section4}

Given a solution $u$ to problem \eqref{P}, for $p(x)\geq 2$ we set
\begin{equation*}
	%\label{eq:khajDKajfjh}
\rho(x) = |Du(x)|^{p(x)-2}, \quad x\in \Omega,
\end{equation*}
and  define the
Hilbert space $H^{1,2}_\rho(\Omega)$
as the completion of $C^\infty(\Omega)$ with respect to the norm
\begin{equation*}
	%\label{normaspazio}
\|v\|^2_{H^{1,2}_\rho}=\int_{\Omega}v^2+\int_{\Omega}\rho(x)|Dv|^2.
\end{equation*}
Since the domain $\Omega$ is smooth, equivalently, $H^{1,2}_\rho$ is composed by the functions
$v$ which have distributional derivative with finite norm.
The space $H^{1,2}_{0,\rho}$ is defined as the completion of $C^\infty_0(\Omega)$ with respect to the norm
$\|\cdot \|_{H^{1,2}_\rho}$ and it is a reflexive Hilbert space.

\noindent
Moreover let $\mu \in C(\overline \Omega)$ be such that $0<\mu_{-}\leq\mu_{+}\leq1$
and let us define the function
\begin{equation}
	\label{eq:V}
	V_\mu[g,U](x):=\int_{U} \frac{g(y)}{|x-y|^{N(1-\mu(x))}}dy.
\end{equation}
By \cite[Theorem 3.1]{sam2} it follows that,  for any $1\leq q(x) \leq \infty,$  with
$$
\displaystyle \frac1{m(x)} - \frac1{q(x)}\leq\mu(x)
$$
it follows
\begin{equation}\label{eq:potential}
\|V_\mu[g,{\Omega}](x)\|_{q(\cdot)}\leq\Theta\|g\|_{m(\cdot)},
\end{equation}
for some positive constant $\Theta$ and for any $g\in L^{m(\cdot)}(\Omega)$. We can now prove the following

\begin{theorem}\label{thm: Sobolev}
Let $p(x)\geq 2$ for all $x\in\Omega$ and set
$$
\bar t:=\inf_{x\in\overline\Omega}\frac{p(x)-1}{p(x)-2}r,
$$
where $r>0$ is such that
\begin{equation}
	\label{eq:weight}
\int_{\Omega} \frac{1}{\rho^{t(x)}|x-y|^{\gamma}}\leq C(\gamma),\qquad 
\max_{x \in \overline{\Omega}} \frac{p(x)-2}{p(x)-1}  \leq r<1, 
\qquad t(x):=\frac{p(x)-1}{p(x)-2}r,
\end{equation}
with $N-2\bar t<\gamma< N-2$ if $N\geq 3$ and
$\gamma =0$  if $N=2$. Then,  for any $w\in H^{1,2}_{0,\rho}(\Omega)$, we have
\begin{equation}\label{FTFTnddjncj}
\|w\|_{q(\cdot)}\leq C\Big( \int_{\Omega}\rho|Dw|^2\Big)^{\frac 12},
\end{equation}
for some positive constant $C$ and any $1\leq q(\cdot)< 2^*(\bar t)$, where
\begin{equation}\label{eq:2*}
\frac{1}{2^*(\bar t)}=\frac 12 - \frac 1N + \frac 1{\bar t} \left( \frac 12 - \frac{\gamma}{2N}\right).
\end{equation}
Furthermore the embedding of $H^{1,2}_{0,\rho}(\Omega)$ into $L^{q(\cdot)}(\Omega)$ is compact.
\end{theorem}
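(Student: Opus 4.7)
The plan is to derive a pointwise bound of the form $|w(x)|^\theta\leq C\,V_\mu\bigl[(\rho|Dw|^2)^{\theta/2}\bigr](x)$ for suitable $\theta$ and $\mu$, and then invoke the potential estimate \eqref{eq:potential}. Working with $w\in C^\infty_0(\Omega)$ (dense in $H^{1,2}_{0,\rho}(\Omega)$), I begin with the classical representation $|w(x)|\leq C\int_\Omega|Dw(y)|\,|x-y|^{-(N-1)}\,dy$, decompose $|Dw|=(\rho|Dw|^2)^{1/2}\rho^{-1/2}$, and split the singular kernel as $|x-y|^{-(N-1)}=|x-y|^{-\alpha_1}|x-y|^{-\alpha_2}$ with $\alpha_1+\alpha_2=N-1$. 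I then apply H\"older's inequality with conjugate exponents $\theta=2\bar t/(2\bar t-1)$ and $\theta'=2\bar t$, choosing $\alpha_2=\gamma/(2\bar t)$ so that $\alpha_2\theta'=\gamma$; consequently $\alpha_1=N-1-\gamma/(2\bar t)$.

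This choice turns the second H\"older factor into $\bigl(\int_\Omega\rho^{-\bar t}|x-y|^{-\gamma}\,dy\bigr)^{1/\theta'}$, which is uniformly bounded in $x$ by Lemma~\ref{le:1/gradient}: since $\rho$ is bounded on $\overline\Omega$ and $\bar t\leq t(y)$ pointwise, we have $\rho^{-\bar t}\leq C\rho^{-t(y)}$ and \eqref{eq:weight} applies. Raising the H\"older bound to the $\theta$-th power yields
\begin{equation*}
|w(x)|^\theta\leq C\int_\Omega(\rho|Dw|^2)^{\theta/2}(y)\,|x-y|^{-\alpha_1\theta}\,dy=C\,V_\mu\bigl[(\rho|Dw|^2)^{\theta/2}\bigr](x),
\end{equation*}
where $\mu=1-\alpha_1\theta/N=(2\bar t+\gamma-N)/(N(2\bar t-1))$, strictly positive precisely because $\gamma>N-2\bar t$.

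Next I apply \eqref{eq:potential} with constant $M=2/\theta$ and variable target $Q(\cdot)=q(\cdot)/\theta$; since $\|(\rho|Dw|^2)^{\theta/2}\|_{L^M(\Omega)}=\bigl(\int_\Omega\rho|Dw|^2\bigr)^{\theta/2}$, this gives
\begin{equation*}
\|w\|_{q(\cdot)}^\theta=\|w^\theta\|_{q(\cdot)/\theta}\leq C\bigl\|V_\mu\bigl[(\rho|Dw|^2)^{\theta/2}\bigr]\bigr\|_{q(\cdot)/\theta}\leq C\Bigl(\int_\Omega\rho|Dw|^2\Bigr)^{\theta/2},
\end{equation*}
provided $1/M-1/Q(\cdot)=\theta(1/2-1/q(\cdot))\leq\mu$. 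A direct algebraic computation shows this is equivalent to $q(\cdot)\leq 2^*(\bar t)$ as defined in \eqref{eq:2*}, and the strict version holds for $q(\cdot)<2^*(\bar t)$, proving \eqref{FTFTnddjncj}.

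For the compactness claim, I would exploit that the same H\"older splitting applied with $\alpha_2=0$ (invoking Lemma~\ref{le:1/gradient} with $\gamma=0$) gives the continuous inclusion $H^{1,2}_{0,\rho}(\Omega)\hookrightarrow W^{1,m_0}_0(\Omega)$ with $m_0=2\bar t/(1+\bar t)$, so the classical Rellich-Kondrachov theorem yields compactness $W^{1,m_0}_0(\Omega)\hookrightarrow\hookrightarrow L^s(\Omega)$ for every $s<m_0N/(N-m_0)$; the full subcritical range is then recovered by interpolating this compact embedding with the continuous embedding into $L^{2^*(\bar t)-\eps}(\Omega)$ established above. I expect the main difficulty to be the arithmetic bookkeeping showing that the choice $(\theta,\theta',\alpha_1,\alpha_2)$ reproduces exactly the formula for $2^*(\bar t)$, together with careful handling of the variable-exponent identity $\|w^\theta\|_{s(\cdot)}=\|w\|_{\theta s(\cdot)}^\theta$ which is what links the modular and norm levels in the chain above.
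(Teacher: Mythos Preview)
Your argument is essentially the same as the paper's: both start from the Riesz potential representation $|w(x)|\leq C\int_\Omega |Dw(y)||x-y|^{-(N-1)}dy$, split $|Dw|=\rho^{1/2}|Dw|\cdot\rho^{-1/2}$ and $|x-y|^{-(N-1)}=|x-y|^{-(N-1-\gamma/(2\bar t))}\cdot|x-y|^{-\gamma/(2\bar t)}$, apply H\"older with exponents $(2\bar t)'$ and $2\bar t$, bound the weight factor via~\eqref{eq:weight}, and finish with the variable-exponent potential estimate~\eqref{eq:potential}; your $\theta$ is exactly the paper's $(2\bar t)'$ and your $\mu$ coincides with theirs. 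For compactness the paper simply defers to~\cite{CES1}, while you sketch a direct Rellich--Kondrachov plus interpolation route, which is a legitimate alternative.
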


\begin{proof}
We can assume   that $w\in C_c^1(\Omega)$.
Hence  standard potential estimates (see \cite[Lemma 7.14]{GT}) give
\begin{equation*}
	%\label{eq:hip_real}
	|w(x)|\leq C \int_{\Omega}\frac{|Dw(y)|}{|x-y|^{N-1}}dy,
\end{equation*}
where $C$ is a constant depending on the dimension $N$. Then
\begin{align*}
|w(x)|&\leq  C\int_{\Omega}\frac{|Dw(y)|}{|x-y|^{N-1}}dy \\
&\leq  C\int_{\Omega}\frac{1}{\rho^\frac 12|x-y|^{\frac{\gamma}{2\bar t}}}\frac{|Dw(y)|\rho^{\frac 12}}{|x-y|^{N-1-\frac{\gamma}{2\bar t}}}dy\\
&\leq  C\left(\int_{\Omega}\frac{1}{\rho^{\bar t}|x-y|^\gamma}dy\right)^{\frac{1}{2\bar t}}
\Big(\int_{\Omega}\frac{\big(|Dw(y)|\rho^{\frac 12}\big)^{(2\bar t)'}}{|x-y|^{(N-1-\frac{\gamma}{2\bar t})(2\bar t)'}}dy\Big)^{\frac{1}{(2\bar t)'}},
\end{align*} 
where  in the last inequality we used H\"older inequality with $\frac{1}{2\bar t}+\frac{1}{(2\bar t)'}=1$. Note that, by the definition of~$\bar t$ and by \eqref{eq:weight}, it follows that
$\int_{\Omega} \frac{1}{\rho^{\bar t}|x-y|^{\gamma}}\leq C.$
Hence
\begin{equation}
	\label{eq:w1}
|w(x)|\leq  C \Big(\int_{\Omega}
\frac{\big(|Dw(y)|\rho^{\frac 12}\big)^{(2\bar t)'}}{|x-y|^{(N-1-\frac{\gamma}{2\bar t})(2\bar t)'}}dy\Big)^{\frac{1}{(2\bar t)'}}.
\end{equation}
We point out that
\begin{equation}\label{eq:f_summability}
(|Dw|\rho^{\frac 12})^{(2\bar t)'}\in L^{\frac{2}{(2\bar t)'}}(\Omega).
\end{equation}
From \eqref{eq:w1}, by using equation \eqref{eq:V} with $\mu=1 -\frac{1}{N}(N-1-\frac{\gamma}{2\bar t})(2\bar t)'$, we obtain
\begin{equation*}
%	\label{eq:w2}
|w(x)|\leq  C\big(V_\mu\big[\big(|D w(y)|\rho^\frac12\big)^{(2\bar t)'},\Omega\big](x)\big)^\frac{1}{(2\bar t)'}\,.
\end{equation*}
Since $\gamma>N-2\bar t$,
we also have $N\bar t-2N+2\bar t+\gamma>0$ and $\mu>0$.
We shall use now \eqref{eq:potential} (see  \cite[Theorem 3.1]{sam2}) with
$\frac 1{m}=(2\bar t)'/2$, see~\eqref{eq:f_summability}.
Let us now fix an arbitrary $\tilde q(\cdot)>1$ such that
$1/m -1/{\tilde q(\cdot)} \leq\mu$, which is possible since  $1/m-\mu<1$, 
as follows by $N\bar t-2N+2\bar t+\gamma>0$. Therefore, we have
\begin{align}
	\label{eq:ws1}
\|w(x)\|_{\tilde q(\cdot)(2\bar t)'}&\leq
C\Big\|\Big(V_\mu\Big[\Big(|D w(y)|\rho^\frac12\Big)^{(2\bar t)'},\Omega\Big](x)\Big)^\frac{1}{(2\bar t)'}\Big\|_{\tilde q(\cdot)(2\bar t)'}\\\nonumber
& \leq C\Big\|V_\mu\Big[\Big(|D w(y)|\rho^\frac12\Big)^{(2\bar t)'},\Omega\Big](x)\Big\|_{\tilde q(\cdot)}^{\frac1{(2 \bar t)'}}
.\end{align}
From \eqref{eq:ws1}, by  \eqref{eq:potential}  we get
\begin{equation*}
	%\label{eq:wsshafjF1}
\|w\|_{\tilde q(\cdot)(2\bar t)'}\leq C\Big( \int_{\Omega}\rho|Dw|^2\Big)^{\frac 12},
\end{equation*}
that gives \eqref{FTFTnddjncj} and \eqref{eq:2*} with $q(x)= \tilde q(x)(2\bar t)'$, and consequently for any $q(\cdot)$ as in the statement of the theorem.
\noindent
Finally the compactness of the embedding follows arguing exactly as in \cite{CES1}.
\end{proof}

\subsection{The eigenvalue problem}\label{hfjhdbvjdhbvjhdf}
Let us consider the linearized operator
\begin{align*}
	%\label{lin}
\hspace{1cm} L_u(v,\varphi)&:= \int_\Omega |Du|^{p(x)-2}(Dv, D\varphi)\\
&+\int_\Omega (p(x)-2) |Du|^{p(x)-4}(Du,Dv)(Du,D\varphi) - \int_\Omega \partial_sf(|x|,u)v \varphi, \nonumber
\end{align*}
for any $v,\varphi\in H^{1,2}_{0,\rho}$. We also define $\|\cdot\|_{A_u}$ to be the norm arising from the scalar product
\[
<v,\varphi >:=\int_\Omega |Du|^{p(x)-2}(Dv,D\varphi)+\int_\Omega (p(x)-2) |Du|^{p(x)-4}(Du,Dv)(Du,D\varphi),
\]
that is a norm equivalent to $\|v\|_{H^{1,2}_{0,\rho}}=\big( \int_{\Omega}\rho|Dv|^2\big)^{\frac 12}$.
\noindent
Since $\partial_sf(|x|,u) \in L^\infty (\Omega)$,
the first eigenvalue $\mu_1(u)$ of the linearized operator
is well defined by
$$
\mu_1(u)=\underset{\phi \in H^{1,2}_{0,\rho} \backslash \{0\}}{\inf} R_u(\phi),\qquad
R_u(\phi)=\frac{\|\phi\|_{A_u}^2 -\int_\Omega  \partial_sf(|x|,u) \phi^2}{\int_{\Omega}
\phi^2}.$$
Consider now a minimizing sequence $\phi_n \in H^{1,2}_{0,\rho}$, $\int_\Omega \phi_n^2=1$, with
$R_u(\phi_n)$ converging to $\mu_1(u)$  as $n\to\infty.$
Since $\partial_sf(|x|,u) \in L^\infty (\Omega)$, we have that the sequence
$(\|\phi_n\|_{A_u})$ remains bounded. Therefore, up to a subsequence, we get that
$\phi_n \rightharpoonup \phi_1$ weakly in $H^{1,2}_{0,\rho}$
and therefore $
\phi_n \rightarrow \phi_1$ strongly in  $L^2(\Omega)$ (by combining 
the assertions of Lemma~\ref{le:1/gradient} and Theorem~\ref{thm: Sobolev}).
Now, the term $\int_\Omega  \partial_sf(|x|,u) \phi^2$ is continuous in
$L^2(\Omega)$ and $\|\cdot \|_{A_u}$ is weakly lower
semi-continuous in $H^{1,2}_{0,\rho}$. Therefore, $\phi_1 \in  H^{1,2}_{0,\rho}$ is such
that $\int_\Omega \phi_1^2=1$ and $R_u(\phi_1)\leq \mu_1(u)$. Hence, $\mu_1(u)$ is attained at $\phi_1$.
It is now standard to show that $\phi_1$ solves
$L_u(\phi_1,\varphi) =\int_{\Omega} \mu_1(u) \phi_1\varphi$ for any $\varphi\in H^{1,2}_{0,\rho}.$
Arguing now exactly as in \cite[see p.299]{CES1}, we get that every minimizer is of fixed sign and
the first eigenspace is one-dimensional.

\begin{remark}\rm
	\label{spectral}
Following \cite{CES2} it is now possible to develop a complete spectral theory for the linearized operator, showing that
it has an increasing  discrete sequence of eigenvalues with finite dimensional eigenspaces.
\end{remark}

\subsection{Proof of Theorem~\ref{radsymm} completed}

\noindent Let us write the solution $u=u(r,\theta)$ in polar coordinates,
where $r=|x|$ and $\theta=(\theta_1,\dots,\theta_{n-1})$ are the
$n-1$ angular variables.
\noindent Assume first that $u$ is Semi-stable according to Definition \ref{jhfgjdbjf}.
\noindent If $u$ was not radial,
then $u_{\theta_i} \not=0$ and $u_{\theta_i}$ changes sign, for some $i \in \{1,\dots,n-1 \}$.
Notice now that, since we are considering $C^1$ solutions, it is clear from the proof that \cite[Lemma 3.1]{CLS}
can be stated with $\beta\equiv 0$ and, passing to the limit, with $\varepsilon=0$ as well. In particular, we get
\begin{equation*}
%	\label{boh}
\int_\Omega |Du|^{p(x)-2} |Du_{\theta_i}|^2 \leq C \int_\Omega |Du|^{p(x)-2} \|D^2 u\|^2\leq C,
\end{equation*}
and by the boundary conditions we obtain $u_{\theta_i}\in H^{1,2}_{0,\rho}$.
It is now easy to see that, since $p(x)$ is radially 
symmetric, it follows, differentiating the equation in \eqref{P} 
with respect to $\theta_i$, that
\begin{equation}
	\label{uteta}
L_u (u_{\theta_i}, \varphi) = 0, \qquad \forall  \varphi \in H^{1,2}_{0,\rho}.
\end{equation}
In particular, $u_{\theta_i}$ is an eigenfunction of the
linearized operator corresponding to the $0$ eigenvalue. By the semi-stability assumption on $u$, this implies that
$u_{\theta_i}$ is the  first eigenfunction of $L_u$ and consequently (see Section \ref{hfjhdbvjdhbvjhdf}) it should have
constant sign in $\Omega$. This contradiction shows that $u$ is radially symmetric.
\noindent If else we assume that $u$ is non-degenerate, the conclusion follows in the
same way, noticing that $0$ is not an eigenvalue and therefore \eqref{uteta}
implies that $u_{\theta_i}=0$. \qed

\bigskip
\noindent
{\bf Acknowledgments.} The authors wish to thank Prof.\ Petteri Harjulehto
and Dr.\ Michela Eleuteri for providing the useful bibliographic reference~\cite{sam2}.
\medskip

\bigskip


\begin{thebibliography}{99}
\smallskip

\bibitem{bwwi}
{\sc T.\ Bartsch, T.\ Weth, M.\ Willem},
Partial symmetry of least energy nodal solutions to some variational problems,
{\em J. Anal. Math.} {\bf 96} (2005), 1--18.

\bibitem{CLS}
{\sc A. Canino, P. Le, B. Sciunzi},
Local $W_{\rm loc}^{2m}$ regularity for $p(x)$-Laplace equations,
{\em Manuscripta Math.}, in press.

\bibitem{CES1}
{\sc D. Castorina, P. Esposito and B. Sciunzi},
Degenerate elliptic equations with singular nonlinearities,
{\em Calc. Var. Partial Differential Equations} {\bf 34} (2009), 279--306.

\bibitem{CES2}
{\sc D. Castorina, P. Esposito and B. Sciunzi},
Spectral theory for linearized p-Laplace equations,
{\em Nonlinear Anal. TMA} {\bf 74} (2011), 3606--3613.

\bibitem{chabr}
{\sc J. Chabrowski, Y. Fu},
Existence of solutions for $p(x)$-Laplacian problems on a bounded domain,
{\em J. Math. Anal. Appl.} {\bf 306} (2005), 604--618.

\bibitem{DS}
{\sc L. Damascelli, B. Sciunzi},
Regularity, monotonicity and symmetry of positive solutions
of $m$-Laplace equations,
{\em J. Differential Equations} {\bf 206} (2004), 483--515.

\bibitem{book}
{\sc L.\ Diening, P.\ Harjulehto, P.\ H\"ast\"o, M. Ru\v zi\v cka},
Lebesgue and Sobolev spaces with variable exponents, Lecture Notes in Mathematics,
{\bf 2017}, Springer-Verlag, Heidelberg, 2011.

\bibitem{fan}
{\sc X. Fan},
Existence and uniqueness fot the $p(x)$-Lpalacian-Dirichlet problems,
{\em Math. Nachr.} {\bf 284} (2011), 1435--1445.

\bibitem{dino-sns}
{\sc A.\ Farina, E.\ Valdinoci, B.\ Sciunzi},
Bernstein and De Giorgi type problems: new results via a geometric approach,
{\em Annali Scuola Normale Superiore Pisa, Cl.Sci} {\bf 7} (2008)

\bibitem{FMS2}
{\sc A.\ Farina, B.\ Montoro, B.\ Sciunzi},
Monotonicity of solutions of quasilinear
degenerate elliptic equation in half-spaces,
{\em preprint}.

\bibitem{GNN}
{\sc B.~Gidas, W.~M.~Ni, and L.~Nirenberg},
Symmetry and related properties via the maximum principle,
{\em Comm.\ Math.\ Phys.\ } {\bf 68} (1979), 209--243.

\bibitem{GT}
{\sc D.~Gilbarg, N.~S.~Trudinger},
Elliptic Partial Differential Equations of Second Order.
{\em Reprint of the 1998 Edition}, Springer.

\bibitem{MSS}
{\sc L.\ Montoro, B.\ Sciunzi, M.\ Squassina},
Asymptotic symmetry for a class of quasi-linear parabolic problems,
{\em Advanced Nonlinear Studies} {\bf 10} (2010), 789--818.

\bibitem{sam2}{ \sc N.G. Samko,  S.G. Samko,  B.G. Vakulov},
Weighted {S}obolev theorem in {L}ebesgue spaces with variable exponent,
{\em J. Math. Anal. Appl.} {\bf 335} (2007), 560--583.

\bibitem{serrin}
{\sc J.~Serrin},
Weighted {S}obolev theorem in {L}ebesgue spaces with variable exponent,
{\em Arch. Rational Mech. Anal} {\bf 43} (1971), 304--318.

\bibitem{smetswillem}
{\sc D. Smets, M.\ Willem},
Partial symmetry and asymptotic behavior for some elliptic variational problems,
{\em Calc. Var. Partial Differential Equations} {\bf 18} (2003), 57--75.

\bibitem{marco-jean}
{\sc M. Squassina, J. Van Schaftingen},
Finding critical points whose polarization is a critical point,
{\em Topol. Meth. Nonlinear Anal.}, to appear.

\bibitem{jvs}
{\sc J. Van Schaftingen},
Symmetrization and minimax principles,
{\em Commun. Contemp. Math.} {\bf 7} (2005), 463--481.

\bibitem{jvs-topol}
{\sc J. Van Schaftingen},
Approximation of symmetrizations and symmetry of critical points,
{\em Topol. Methods Nonlinear Anal.} {\bf 28} (2006), 61--85.

\bibitem{myself}
{\sc M. Squassina},
Radial symmetry of minimax critical points for nonsmooth functionals,
{\em Commun. Contemp. Math.} {\bf 13} (2011), 487--508.

\bibitem{Zha}{\sc Q. Zhang},
 A strong maximum principle for differential equations with nonstandard {$p(x)$}-growth conditions,
{\em J. Math. Anal. Appl.} {\bf 312} (2005),  24--32.

\end{thebibliography}
\end{document}